\newtheorem{Theorem}{Theorem}[section]
\newtheorem{Proposition}[Theorem]{Proposition}
\newtheorem{Lemma}[Theorem]{Lemma}
\newtheorem{Corollary}[Theorem]{Corollary}
\theoremstyle{definition}
\newtheorem{Definition}[Theorem]{Definition}
\newtheorem{Remark}[Theorem]{Remark}
\newcommand{\bTheorem}[1]{
	\begin{Theorem} \label{T#1} }
	\newcommand{\eT}{\end{Theorem}}
\newcommand{\bProposition}[1]{
	\begin{Proposition} \label{P#1}}
	\newcommand{\eP}{\end{Proposition}}
\newcommand{\bLemma}[1]{
	\begin{Lemma} \label{L#1} }
	\newcommand{\eL}{\end{Lemma}}
\newcommand{\bCorollary}[1]{
	\begin{Corollary} \label{C#1} }
	\newcommand{\eC}{\end{Corollary}}
\newcommand{\bRemark}[1]{
	\begin{Remark} \label{R#1} }
	\newcommand{\eR}{\end{Remark}}
\newcommand{\bDefinition}[1]{
	\begin{Definition} \label{D#1} }
	\newcommand{\eD}{\end{Definition}}
\newcommand{\Enu}{\mathcal{E}_{\mathcal{V}}}
\newcommand{\vt}{\vartheta}
\newcommand{\vr}{\varrho}
\newcommand{\vu}{\vc{u}}
\newcommand{\vm}{\vc{m}}
\newcommand{\vn}{\vc{n}}
\newcommand{\vtB}{\vt_B}
\newcommand{\vc}[1]{{\bf #1}}
\newcommand{\tvB}{\widetilde{\vB}}
\newcommand{\tvr}{\widetilde{\vr}}
\newcommand{\tvt}{\widetilde \vt}
\newcommand{\Nu}{\mathcal{V}_{t,x}}
\newcommand{\Du}{\mathbb{D}_{\vu}}
\newcommand{\DT}{\vc{D}_\vt}
\newcommand{\bB}{\vc{B}_B}
\newcommand{\vB}{\vc{B}}
\newcommand{\vU}{\vc{U}}
\newcommand{\vH}{\vc{H}}
\newcommand{\bfphi}{\boldsymbol{\varphi}}
\newcommand{\bfpsi}{\boldsymbol{\psi}}
\newcommand{\ds}{\,\mathrm{d}\sigma}
\newcommand{\wt}[1]{\widetilde{#1}}
\newcommand{\aleq}{\stackrel{<}{\sim}}
\newcommand{\tr}{{\rm Tr\,}}
\newcommand{\Ov}[1]{\overline{#1}}
\newcommand{\Curl}{{\bf curl}_x}
\newcommand{\Div}{{\rm div}_x}
\newcommand{\Grad}{\nabla_x}
\newcommand{\dx}{\,{\rm d} {x}}
\newcommand{\dt}{\,{\rm d} t }
\newcommand{\intO}[1]{\int_{\Omega} #1 \ \dx}
\newcommand{\br}{ \nonumber \\ }
\newcommand{\lang}{\left\langle}
\newcommand{\rang}{\right\rangle}
\def\softd{{\leavevmode\setbox1=\hbox{d}%
		\hbox to 1.05\wd1{d\kern-0.4ex{\char039}\hss}}}
\definecolor{Cgrey}{rgb}{0.85,0.85,0.85}
\definecolor{Cblue}{rgb}{0.50,0.85,0.85}
\definecolor{Cred}{rgb}{1,0,0}
\definecolor{fancy}{rgb}{0.10,0.85,0.10}
\begin{document}


\title{Stability of strong solutions to the full compressible magnetohydrodynamic system with non-conservative boundary conditions}
\author{Hana Mizerov\' a	}

\date{}

\maketitle

\smallskip

\centerline{Institute of Mathematics, Czech Academy of Sciences}

\centerline{\v Zitn\' a 25, CZ-115 67 Praha 1, Czech Republic}

\bigskip
\centerline{Department of Mathematical Analysis and Numerical Mathematics, Comenius University Bratislava}
\centerline{Mlynsk\' a dolina, 842 48 Bratislava, Slovakia}

\begin{abstract}
We define a dissipative measure-valued (DMV) solution to the system of equations governing the motion of a general compressible, viscous, electrically and heat conducting fluid driven by non-conservative boundary conditions. We show the stability of strong solutions to the full compressible magnetohydrodynamic system in a large class of these DMV solutions. In other words, we prove a DMV-strong uniqueness principle: a  DMV solution coincides with the strong solution emanating from the same initial data as long as the latter exists. 
\end{abstract}
\let\thefootnote\relax\footnote{The work was supported by the Czech Sciences Foundation (GA\v CR) Grant Agreement 24–11034S and by the Institute of Mathematics, Czech Academy of Sciences (RVO 67985840).}


{\bf Keywords:} compressible magnetohydrodynamics, dissipative measure-valued solution, dissipative systems, non-conservative boundary conditions

\section{Introduction}
\label{S-I}

Magnetohydrodynamics, as a sub-discipline of fluid dynamics, describes the behaviour of electrically conducting fluids in the presence of magnetic fields. It was initiated by Hannes Alfv\' en in the late $19^{\rm th}$ century \cite{Alf} and has been further developed, in particular, in the context of  space physics, geophysics, and astrophysics where many amazing events occur due to the action of  magnetic fields, see e.g. \cite{TCD}. As an example serves the large scale dynamics of the solar convection zone which is essentially driven by the boundary conditions, see \cite{G}.

The full MHD system in one dimension has been extensively studied, see e.g. \cite{FJN,KaOka,W} and the references therein. We refer to \cite{ChZ} for the local well-posedness of strong solutions to the Cauchy problem in two dimensions. To name a few results for the three-dimensional full compressible MHD model, see \cite{HJP} for the global existence of strong solutions to the Cauchy problem assuming small data, \cite{LW} for the local existence and uniqueness of strong solutions on a bounded domain with three types of boundary conditions, see also  \cite{DuFei} for the global-in-time existence of weak solutions for any finite energy data posed on a bounded spatial domain  supplemented with conservative boundary conditions, and \cite{FeGwKwSG} for  global-in-time existence of weak solutions with non-conservative boundary conditions even for problems with large data and solutions remaining out of equilibrium in the long run. The proof of existence of a bounded absorbing set in the energy space and asymptotic compactness of trajectories for the full MHD system driven by non-conservative boundary conditions can be found in \cite{BreFeiMHD}. In \cite{H} the author proves the existence of a dissipative measure-valued solution, and also shows the DMV-strong uniqueness principle for the full MHD system, however, only for homogeneous Dirichlet  boundary conditions.

The most interesting real world phenomena can be described as dissipative systems exchanging energy and matter with their surroundings while dissipating energy in accordance with the Second law of thermodynamics. They can be mathematically described  as systems of field equations equipped with inhomogeneous boundary data. Therefore we consider the full three-dimensional compressible MHD system 
 with non-conservative  boundary conditions of both Dirichlet and Neumann type. 
 Our aim is twofold:
 \begin{itemize}
\item give a proper definition of a DMV solution to the full compressible MHD system with non-conservative boundary conditions 
\item prove stability of strong solutions to the full compressible MHD system in a class of DMV solutions. 
 \end{itemize}
 In recent years it has turned out that the concept of DMV solution together with DMV-strong uniqueness principle are powerful tools in numerical analysis of fluid flow models, cf. \cite{FLMS} and the references therein.
 
 The paper is organized as follows: in Section~\ref{S-M} we give the precise formulation of the full compressible MHD model, followed by the definition of DMV solution in Section~\ref{S-DMV}. The relative energy inequality as a key tool to prove the main result is derived in Section~\ref{S-REI}. Section~\ref{S-UP} contains several conditional and one unconditional result on stability of strong solutions in the class of DMV solutions together with the proofs. Our results with possible applications are finally summarized in Conclusions.

\section{Full compressible magnetohydrodynamic model}\label{S-M}

The time evolution of a viscous, compressible, electrically and heat conducting fluid is governed 
by the system of field equations of \emph{compressible magnetohydrodynamics}, in which $\vr = \vr(t,x)$  denotes the mass density, $\vt = \vt(t,x)$ is the (absolute) temperature,  
$\vu = \vu(t,x)$  stands for the velocity and $\vB = \vB (t,x)$ for the magnetic field:
\begin{itemize}
\item{\bf equation of continuity:}
	\begin{equation} \label{mhd_ce}
		\partial_t \vr + \Div (\vr \vu) = 0
	\end{equation}
\item{\bf momentum equation:}
	\begin{equation} \label{mhd_me}
		\partial_t (\vr \vu) + \Div (\vr \vu \otimes \vu)   + \Grad p (\vr, \vt) = \Div \mathbb{S}(\vr,\vt, \mathbb{D}(\vu)) + \Curl  \vc{B} \times \vc{B}
	\end{equation}
\item{\bf induction equation:}
\begin{equation} \label{mhd_ie} 
	\partial_t \vc{B} + \Curl (\vc{B} \times \vu ) + \Curl (\zeta (\vr,\vt) \Curl  \vc{B} ) = 0,\
	\Div \vc{B} = 0
	\end{equation}
\item{\bf internal energy balance:}
	\begin{align}
		\partial_t ( \vr e(\vr, \vt) ) + \Div (\vr e(\vr, \vt) \vu) &+ \Div \vc{q}(\vr,\vt, \Grad \vt) \br &=
		\mathbb{S}(\vr,\vt,\mathbb{D}(\vu)):\Grad \vu + \zeta(\vr,\vt) |\Curl \vB|^2  - p(\vr, \vt) \Div \vu.
		\label{mhd_ieb}
	\end{align}	
\end{itemize}
For the model see, for instance, \cite{BreFeiMHD,LL,WP}.
We suppose the pressure $p = p(\vr, \vt)$ and the internal energy $e=e(\vr, \vt)$ are interrelated through \emph{Gibbs' equation}
\begin{equation} \label{gibbs}
	\vt D s = De + p D \left( \frac{1}{\vr} \right),
\end{equation}
where $s = s(\vr, \vt)$ is the entropy. Consequently, the internal energy balance \eqref{mhd_ieb} may be reformulated
in the form of 
\begin{itemize}
\item {\bf entropy balance equation:} 
\begin{align}
	\partial_t (\vr s(\vr, \vt)) &+ \Div (\vr s(\vr, \vt) \vu) + \Div \left( \frac{\vc{q}(\vr,\vt, \Grad \vt)}{\vt} \right) \br &=
	\frac{1}{\vt} \left( \mathbb{S}(\vr,\vt, \mathbb{D}(\vu)) : \Grad \vu - \frac{\vc{q}(\vr,\vt, \Grad \vt) \cdot \Grad \vt}{\vt} + \zeta(\vr,\vt) |\Curl \vB |^2 \right). \label{mhd_eb}
\end{align}
\end{itemize}
In addition, we impose {\it hypothesis of thermodynamic stability}, 
\begin{equation} \label{hts}
	\frac{ \partial p (\vr, \vt) }{\partial \vr} > 0,\
	\frac{ \partial e (\vr, \vt) }{\partial \vt} > 0.
\end{equation}
We consider a \emph{Newtonian fluid} with the viscous stress tensor
\begin{equation} \label{S}
	\mathbb{S}(\vr,\vt, \mathbb{D}(\vu)) = \mu (\vr,\vt) \left( 2\mathbb{D}(\vu) - \frac{1}{3} \tr(\mathbb{D}(\vu)) \mathbb{I} \right) +
	\frac{\eta(\vr,\vt)}{3}\tr(\mathbb{D}(\vu)) \mathbb{I},\quad \mathbb{D}(\vu)=\frac{\Grad\vu+\Grad^t\vu}{2},
\end{equation}
and the heat flux obeying \emph{Fourier's law},
\begin{equation} \label{F}
	\vc{q}(\vr,\vt, \Grad \vt)= - \kappa (\vr,\vt) \Grad \vt.
\end{equation}	
The viscosity coefficients $\mu > 0,$ $\eta \geq 0,$ the heat conductivity coefficient $\kappa > 0,$ and the  magnetic diffusivity coefficient $\zeta >0$ are supposed to be  continuously differentiable functions of the density and temperature. 

We suppose that the fluid occupies a bounded domain $\Omega \subset R^3$ with a smooth boundary, 
\begin{equation}
\begin{aligned} 
\partial \Omega = \Gamma^\vu_D  = &\Gamma^\vt_D \cup \Gamma^\vt_N = 
\Gamma^\vB_D \cup \Gamma^\vB_N, \ \Gamma^\vu_D,  \Gamma^\vt_D, \Gamma^\vt_N, 
\Gamma^\vB_D , \Gamma^\vB_N \ \mbox{compact}, \\
& \Gamma^\vt_D \cap \Gamma^\vt_N = \emptyset,\ 
\Gamma^\vB_D \cap \Gamma^\vB_N = \emptyset.
\label{boundary}
	\end{aligned}
	\end{equation}
The sets $\Gamma^*_D$, $\Gamma^*_N$ are either empty or coincide with a finite union of connected components of $\partial \Omega$. 
The term on the right-hand side of \eqref{mhd_eb} represents the  entropy production rate, and, in accordance with the Second law of thermodynamics, it is always non-negative. 
All forms of energy  eventually transformed to heat  must be allowed to leave through the boundary. Hence, $\Gamma^\vt_D \ne \emptyset.$
Accordingly, we impose the following boundary conditions: 
\begin{itemize}
\item {\bf boundary velocity:}
		\begin{align}  
		\vu|_{\Gamma^\vu_D} &= \vu|_{\partial\Omega}=0, \label{bc_v_D} 
	\end{align}
\item {\bf boundary temperature/heat flux:}
	\begin{align}
		\vt|_{\Gamma^\vt_D} &= \vtB, \label{bc_t_D} \\ 
		\Grad \vc{q}(\vr, \vt, \Grad \vt) \cdot \vc{n}|_{\Gamma^\vt_N} &= 0,
		\label{bc_t_N}
		\end{align}
\item {\bf boundary magnetic field:} 
\begin{align} 
\vB \times \vc{n}|_{\Gamma^\vB_D} &= \vc{b}_\tau, \label{bc_m_D} \\
\vB \cdot \vc{n}|_{\Gamma^\vB_N} = b_\nu,\ 
   \Curl \vB \times \vc{n}|_{\Gamma^\vB_N} &= 0. 
 	\label{bc_m_N}
	\end{align}
\end{itemize}
Clearly, the boundary conditions prescribed on $\Gamma^{*}_D$ are of Dirichlet type, while those on 
$\Gamma^{*}_N$ are of Neumann type. The flux on $\Gamma^{\vB}_N$ is set to be zero for the sake of simplicity. We assume that the boundary data $b_\nu$, $\vc{b}_\tau$ are \emph{stationary}, see \cite[Definition 1.1]{BreFeiMHD}. It means, there exists a continuously differentiable vector field $\vc{B}_B$ such that 
\begin{align*}
\Div \bB = 0,\ \Curl \bB = 0 \ \mbox{in}\ \Omega,\ \bB \cdot \vc{n}|_{\Gamma^\vB_N} = b_\nu,\ 
		\bB \times \vc{n}|_{\Gamma^\vB_D} = \vc{b}_\tau.
\end{align*}
We suppose the boundary temperature $\vtB = \vtB(t,x)$ can be extended inside $\Omega$  and belongs to the class $C^{1,2}([0,T]\times\overline{\Omega};R^3),$ 
\begin{align}
	\vtB>0 \ \mbox{ in } \ [0,T]\times\overline{\Omega}.
\end{align}

Finally, we prescribe a suitable initial state,
\begin{align}
\vr(0,\cdot)=\vr_0, \ \vu(0,\cdot)=\vu_0,\ \vt(0,\cdot)=\vt_0, \ \vB(0,\cdot)=\vB_0,,\quad
 \vr_0,\vt_0>0,   \ \Div\vB_0=0 \mbox{ in }  \Omega.\label{ic}
\end{align}

\section{Dissipative measure-valued solutions}\label{S-DMV}

We give a precise definition of a dissipative measure-valued solution to the system of compressible magnetohydrodynamics equipped with the boundary and initial conditions as described in the previous section.

\subsection{Young measure and initial condition}

We shall consider a Young measure $\mathcal{V}$ (a parametrized family of probability measures) such that 
\begin{align}\label{YM}
	\mathcal{V} \in L^\infty_{\rm weak-*}((0,T)\times\Omega;\mathcal{P}(\mathcal{F})),\ \mathcal{V}=\{\Nu\}_{(t,x)\in(0,T)\times\Omega},
	\end{align}
	where 
\begin{align}\label{PS}
		\mathcal{F}=\bigg\{[\vr,\vu,\vt,\vB,\Du, \vc{D}_{\vt},\vc{C}_{\vB}] \; \big | \; \vr ,\vt \in [0,\infty),\; \vu, \vB , \vc{D}_\vt ,  \vc{C}_{\vB} \in R^3, \ \Du\in R^{3\times 3}_{\rm sym} \bigg\}
\end{align}
is a suitable phase space for the system \eqref{mhd_ce} - \eqref{mhd_ie}, \eqref{mhd_eb}. 
Its initial state \eqref{ic}
can be expressed by means of a parametrized probability measure 
\begin{align}
\mathcal{V}_0=\{\mathcal{V}_{0,x}\}_{x\in\Omega} \in L^\infty_{\rm weak-*}(\Omega;\mathcal{P}(\mathcal{F}_0))
\label{YM_IC}
\end{align}
acting on the phase space 
\begin{align}\label{YM_IC_PS}
\mathcal{F}_0=\bigg\{[\vr_0,\vu_0,\vt_0,\vB_0] \big | \; \vr_0 ,\vt_0 \in [0,\infty),\; \vu_0, \vB_0 \in R^3  \bigg\}.
\end{align} 

\subsection{Definition of a dissipative measure-valued solution}

	\begin{Definition}[{\bf Dissipative measure-valued solution}]\label{D:mvs}
	We say that a Young measure $\mathcal{V}$ is a dissipative  measure-valued solution to the compressible magnetohydrodynamic system \eqref{mhd_ce}-\eqref{mhd_ie}, \eqref{mhd_eb} with \eqref{S}, \eqref{F}, \eqref{bc_v_D} -   \eqref{bc_m_N} if \eqref{YM} holds true, the initial condition is given by \eqref{YM_IC}, and the following holds true:
\begin{itemize}
	\item{\bf equation of continuity:}
	\begin{align} 	\label{mvs_ce}
	\left[ \intO{\lang \Nu;\vr\rang\varphi}\right]_{t=0}^{t=\tau}=\int_0^\tau \intO{ \Big( \lang \Nu;\vr\rang \partial_t \varphi + \lang \Nu; \vr\vu \rang \cdot \Grad \varphi \Big) } \dt 
\end{align}
for any $\varphi \in C^{1}([0, T]\times \Ov{\Omega})$ and for a.e. $0 < \tau < T.$

\item{\bf momentum equation:}
\begin{equation}	\label{mvs_me}
\begin{aligned}
	&\left[\intO{\lang \Nu; \vr\vu\rang \bfphi}\right]_{t=0}^{t=\tau} =\\
	&= \int_0^\tau \intO{ \Big( \lang \Nu; \vr \vu\rang \cdot \partial_t \bfphi - \lang \Nu; \mathbb{S}(\vr,\vt, \Du)\rang: \Grad \bfphi \Big) }\dt \\
		&+\int_0^\tau \intO{\Big( \lang \Nu; \vr \vu \otimes \vu -\vB \otimes \vB \rang : \Grad \bfphi +\lang \Nu;  p(\vr,\vt)+ \frac{1}{2} |\vB |^2 \rang  \Div \bfphi \Big)}\dt   \\
	&
+	\int_0^\tau \int_{\Ov{\Omega}}{ \Grad\bfphi: {\rm d}\mathfrak{R}^M \dx } \dt  +	\int_0^\tau \int_{\Ov{\Omega}}{ \Grad\bfphi: {\rm d}\mathfrak{R}^B\dx} \dt  .
\end{aligned}
\end{equation}
 for any $\bfphi \in C^1([0,T] \times \Ov{\Omega}; R^3),$ $\bfphi|_{\Gamma_D^\vu}=0,$ and for a.e. $0 \leq \tau \leq T.$ \\
Here  $\mathfrak{R}^M,$ $\mathfrak{R}^B \in \mathcal{M}([0,T]\times\Ov{\Omega})$ are called  {\it concentration defect measures} and  stem from the nonlinear terms in the momentum equation, $\vr\vu\otimes\vu+p\mathbb{I}$ and $\frac{1}{2}|\vB|^2\mathbb{I}-\vB\otimes\vB,$ respectively.

\item {\bf magnetic induction equation:}
\begin{equation}\label{mvs_ie}
\begin{aligned}
	&\left[ \intO{\lang \Nu;\vB\rang \cdot \bfphi}  \right]_{t=0}^{t=\tau} = \\
	&=\int_0^\tau \intO{ \Big( \lang \Nu; \vB\rang \cdot \partial_t \bfphi - \lang \Nu; \vB \times \vu + \zeta (\vr,\vt) \vc{C}_B  \rang \cdot \Curl \bfphi \Big) } \dt  
\end{aligned}
\end{equation}	
for any $\bfphi \in C^1([0,T] \times \Ov{\Omega}),$ $\Div\bfphi=0,$  $\bfphi\times\vn|_{\Gamma_D^\vB}=0,$ $\bfphi\cdot\vn|_{\Gamma_N^\vB}=0,$ and for a.a. $0 \leq \tau \leq T.$
Moreover, 
\begin{align}\label{mvs_me_div}
\int_0^\tau\intO{\lang \Nu; \vB \rang\cdot\varphi}\dt=0
\end{align}
for any $\varphi \in C_c^1([0,T]\times\Ov{\Omega})$  and for a.a. $0 \leq \tau \leq T.$

\item {\bf entropy inequality:}
\begin{equation}
\begin{aligned}
&\left[ \intO{ \lang \Nu;\vr s(\vr,\vt)\rang \cdot \varphi } \right]_{t=0}^{t=\tau}  -	\\
&- \int_0^\tau \intO{ \lang \Nu; \frac{1}{\vt}
		\left( \mathbb{S}(\vr,\vt, \Du) : \Du  +  \frac{\kappa(\vr,\vt)|\vc{D}_{\vt}|^2 }{\vt} + \zeta (\vr,\vt) |\vc{C}_{\vB}|^2 \right)\rang\varphi }	\dt	 \\
		& \geq \int_0^\tau \intO{ \Big(\lang \Nu;\vr s(\vr,\vt)\rang  \partial_t \varphi + \lang \Nu; \vr s(\vr,\vt) \vu - \frac{\kappa(\vr,\vt) \vc{D}_{\vt}}{\vt} \rang \cdot \Grad \varphi \Big) } \dt 
	\label{mvs_ei}	
\end{aligned}
\end{equation}
 for any $\varphi \in C^1_c(([0,T] \times \Ov{\Omega})$, $\varphi \geq 0$, $\varphi|_{\Gamma_D^\vt}=0,$ and for  a.a. $0 \leq \tau \leq T.$

\item {\bf ballistic energy inequality:}
\begin{equation}
\begin{aligned}
	& \left[\intO{ \lang \Nu; \left( \frac{1}{2} \vr |\vu|^2 + \vr e(\vr, \vt) + \frac{1}{2} |\vB |^2 - \tvt \vr s(\vr, \vt) - \tvB\cdot \vB\right) \rang} \right]_{t=0}^{t=\tau} +\mathcal{D}_{\tvt}(\tau) +\mathcal{D}_{\tvB}(\tau)+\\
&	+ \int_0^\tau \intO{ \lang	\Nu;\frac{1}{\vt} \left( \mathbb{S}(\vr,\vt, \Du) : \Du +\frac{\kappa(\vr,\vt)| \vc{D}_{\vt}|^2}{\vt} + \zeta (\vr,\vt) |\vc{C}_{\vB} |^2 \right) \rang \tvt} \dt  \\
&\leq -\int_0^\tau \intO{\Big(\lang \Nu;   \vr s (\vr, \vt) \rang \partial_t \tvt +\lang \Nu; \vr s (\vr, \vt) \vu  -\frac{\kappa(\vr,\vt)\vc{D}_{\vt}}{ \vt}\rang \cdot \Grad \tvt                       \Big)} \dt\\ 
	& -\int_0^\tau  \intO{ \Big( \lang \Nu; \vB \rang \cdot \partial_t \tvB -  \lang \Nu; \vB \times \vu + \zeta (\vr,\vt) \vc{C}_{\vB} \rang \cdot \Curl \tvB\Big) }\dt .
		\label{mvs_bei}
\end{aligned}
\end{equation}
 for any 
$\tvt \in C^{1,2}([0,T] \times \Ov{\Omega})$, $\tvt > 0$, $\tvt|_{\Gamma_D^\vt} = \vtB$, and for any $\tvB \in  C^{1,2}([0,T] \times \Ov{\Omega};R^3),$ $\Div \tvB=0,$  $\tvB \times \vn|_{\Gamma_D^\vB}=\vc{b}_\tau,$ $\tvB \cdot \vn|_{\Gamma_N^\vB}=b_\nu,$ and for a.a. $0 \leq \tau \leq T.$\\ Here  $\mathcal{D}_{\tvt}, \mathcal{D}_{\tvB} \in L^\infty(0,T),$ $\mathcal{D}_{\tvt}\geq 0,$ $\mathcal{D}_{\tvB} \geq 0,$ are called {\it dissipation defects.} 
\item {\bf concentration vs. dissipation defects:}
\begin{align}
\int_0^\tau \xi(t)\int_{\Ov{\Omega}} {\rm d}|\mathfrak{R}^M| &\aleq \int_0^\tau \xi(t)\mathcal{D}_{\tvt}(t) \dt \label{cd_m}
\end{align}
for any $\tvt \in C^{1,2}([0,T] \times \Ov{\Omega})$, $\tvt > 0$, $\tvt|_{\Gamma_D^\vt} = \vtB$, and
for a.a. $0 \leq \tau \leq T$ and any $\xi \in C([0,T]),$ $\xi(t)\geq 0,$
\begin{align}
\int_0^\tau \chi(t)\int_{\Ov{\Omega}} {\rm d}|\mathfrak{R}^B| &\aleq \int_0^\tau \chi(t)\mathcal{D}_{\tvB}(t) \dt \label{cd_b}
\end{align}
for any $\tvB \in  C^{1,2}([0,T] \times \Ov{\Omega};R^3),$ $\Div \tvB=0,$ $\tvB \times \vn|_{\Gamma_D^\vB}=\vc{b}_\tau$ and $\tvB \cdot \vn|_{\Gamma_N^\vB}=b_\nu,$ and
for a.a. $0 \leq \tau \leq T$ and any $\chi \in C([0,T]),$ $\chi(t)\geq 0.$
\item {\bf compatibility conditions:}\\
For any $\mathbb Z\in C^1([0,T]\times\Ov{\Omega}; R^{3 \times 3}_{\rm sym})$  it holds that 
\begin{align}\label{cc_S}
&-\int_0^T\intO{  \lang \Nu; \vu\rang\cdot\Div\mathbb Z}\dt =\int_0^T \intO{  \lang \Nu; \Du\rang:\mathbb{Z}}\dt .
\end{align}
For any $\bfpsi\in C^1([0,T]\times\Ov{\Omega};R^3),$ $\bfpsi\cdot\vn|_{\Gamma_N^\vt}=0$ and $\tvt\in C^{1,2}([0,T]\times\Ov{\Omega})$ with $\tvt|_{\Gamma_D^\vt}=\vt_B,$ it holds that
\begin{align}\label{cc_t}
 -\int_0^T \intO{ \lang\Nu;\vt-\tvt\rang\Div\bfpsi}\dt= \int_0^T\intO{ \lang\Nu;\vc{D}_{\vt}-\Grad\tvt\rang\cdot\bfpsi}\dt .
 \end{align}
For any $\wt{\vB}\in C^{1,2}([0,T]\times\Ov{\Omega};R^3),$ $\wt{\vB}\times\vn|_{\Gamma_D^\vB}=\vc{b}_\tau$, and any $\vc{G}\in C^{1}([0,T]\times\Ov{\Omega};R^3),$  $\vc{G}\times\vn|_{\Gamma_N^\vB}=0$ or $\vc{G}|_{\Gamma_N^\vB}=0,$
it holds that 
\begin{equation}
\begin{aligned}\label{cc_B}
&\int_0^T  \intO{    \lang \Nu; \vB -\wt{\vB} \rang \cdot  \Curl\vc{G}}\dt =\int_0^T  \intO{  \vc{G} \cdot \lang \Nu; \vc{C}_\vB-\Curl\wt{\vB} \rang  }\dt .
\end{aligned}
\end{equation}

\item {\bf Korn-Poincar\' e inequality:}\\
For any $\wt{\vu}\in L^2(0,T;W^{1,2}_0(\Omega;R^3))$ it holds that 
\begin{align}\label{KP}
\int_0^\tau\intO{\lang\Nu;|\vu-\wt{\vu}|^2\rang}\dt \aleq \int_0^\tau\intO{\lang\Nu;|\mathbb{T}[\mathbb{D}_\vu]-\mathbb{T}[\mathbb{D}(\wt{\vu})]|^2\rang}\dt ,
\end{align}
where 
\begin{align*}
\mathbb{T}[\mathbb{D}_\vu]=\mathbb{D}_\vu-\frac{1}{3}\tr{\left(\mathbb{D}_\vu\right)}, \ \mathbb{T}[\mathbb{D}(\wt{\vu})]=\mathbb{D}(\wt{\vu})-\frac{1}{3}\tr{\left(\mathbb{D}(\wt{\vu})\right)}.
\end{align*}
\end{itemize}				
		\end{Definition}
\begin{Remark}
The Korn-Poincar\' e inequality \eqref{KP} proven for a Young measure generated by a sequence of $W^{1,2}_0(\Omega;R^3)-$functions can be found in \cite[Lemma 2.2]{FeiNoSL}.

\end{Remark}

\section{Relative energy inequality}\label{S-REI}

The relative energy inequality, introduced by Dafermos \cite{Daf}, is a powerful and elegant tool for measuring the stability of a strong solution compared to a generalized solution, i.e. for proving weak-strong uniqueness principle. This method was developed by Feireisl, Novotn\' y and collaborators, see e.g. \cite{FeiJNo,FeiNoSL,FeiNoWS,FGSGW,FLMS} and the references therein. 

We consider a suitable relative energy  for the compressible MHD system,
\begin{equation}
\begin{aligned}\label{re_fun}
&E\left(\vr, \vt, \vu, \vB \; | \; r, \Theta, \vU, \vH \right) = \frac{1}{2}\vr|\vu-\vU|^2+\frac{1}{2}|\vB-\vH|^2 +\\
&+ \vr e (\vr,\vt)-\Theta\left( \vr s(\vr,\vt)-r s(r,\Theta)\right)-\left(e(r,\Theta) - \Theta s(r,\Theta) +\frac{p(r,\Theta)}{r}  \right)(\vr-r) - r e (r,\Theta),
\end{aligned}
\end{equation}
see also \cite{FeGwKwSG}. The variables $r, \Theta, \vU, \vH$ stand for arbitrary sufficiently smooth functions satisfying the compatibility conditions
\begin{equation}
\begin{aligned}
	r &> 0,\ \Theta > 0 \ \mbox{in}\ [0,T] \times \Ov{\Omega}, \ \Theta|_{\Gamma_D^\vt} = \vtB, \ \vU|_{\partial\Omega}=0, \\
	\Div \vH &= 0 \ \mbox{in}\ (0,T) \times \Omega,\
	\vH \times \vn|_{\Gamma_D^\vB} =
	\bB \times \vn|_{\Gamma_D^\vB} = {\bf b_\tau},\ 	\vH \cdot \vn|_{\Gamma_N^\vB} =
	\bB \cdot \vn|_{\Gamma_N^\vB}=b_{\nu}.
\label{smooth_cc}
\end{aligned}
\end{equation}
Hypothesis of thermodynamic stability  \eqref{hts} allows to show that the energy
\begin{align*}
E(\vr, \vt, \vu, \vB) = \frac{1}{2} \vr |\vu|^2 + \vr e(\vr, \vt) + \frac{1}{2} |\vB|^2
\end{align*}
 is a strictly convex l.s.c. function when expressed in the variables $\vr, S = \vr s(\vr, \vt), \vm = \vr \vu, \vB,$ see \cite[Chapter 3, Section 3.1]{FeiNoOS} for details. Since, 
\begin{align*}
\frac{\partial E(\vr, S, \vm, \vB)}{\partial \vr} = \left(  e(\vr, \vt) - \vt s(\vr, \vt) + \frac{p(\vr, \vt) }{\vr} \right),\ \frac{\partial E(\vr, S, \vm, \vB)}{\partial S} = \vt,
\end{align*}
where $\left(  e(\vr, \vt) - \vt s(\vr, \vt) + \frac{p(\vr, \vt) }{\vr} \right)$ is Gibbs' free energy,  the relative energy can be seen as Bregman distance associated to the convex function $E$.
In particular,
\begin{align*}
E \left( \vr, \vt, \vu, \vB \ \Big| r, \Theta, \vU, \vH\right) \geq 0\ \mbox{and} \
E \left( \vr, \vt, \vu, \vB \ \Big| r, \Theta, \vU, \vH \right)  = 0 \ \Leftrightarrow \
( \vr, \vt, \vu, \vB) = (r, \Theta, \vU, \vH )
\end{align*}
as long as $r > 0$.

Given a dissipative measure-valued solution $\mathcal{V}$ to the compressible MHD system \eqref{mhd_ce}-\eqref{mhd_ie}, \eqref{mhd_eb} according to Definition~\ref{D:mvs} we define 
\begin{align*}
\Enu(\tau) = \intO{\lang \mathcal{V}_{\tau,x}; E(\vr,\vt,\vu,\vB \; | \; r,\Theta,\vU,\vH )\rang}. 
\end{align*}
A straightforward manipulation yields
\begin{align*}
\Enu(\tau) &= \intO{\lang  \mathcal{V}_{\tau,x}; \frac{1}{2} \vr |\vu|^2 + \vr e(\vr, \vt) + \frac{1}{2} |\vB|^2 -\Theta \vr s(\vr,\vt)- \vB\cdot\vH  \rang}+\\
& + \frac{1}{2}\intO{ |\vH|^2(\tau,\cdot) }+ \frac{1}{2}\intO{\lang  \mathcal{V}_{\tau,x};\vr \rang  |\vU|^2}-\intO{\lang  \mathcal{V}_{\tau,x}; \vr\vu\rang\cdot \vU}-\\
&- \intO{\lang  \mathcal{V}_{\tau,x}; \vr \rang \left( e(r,\Theta)-\Theta s (r,\Theta)+\frac{p(r,\Theta)}{r}\right)} +\intO{p(r,\Theta)(\tau,\cdot)}.
\end{align*} 
  In order to estimate the terms on the right-hand side we start with the ballistic energy inequality \eqref{mvs_bei}, then set $\varphi=\frac{1}{2}|\vU|^2- \left(  e(r,\Theta)-\Theta s (r,\Theta)+\frac{p(r,\Theta)}{r}\right)$ in the continuity equation \eqref{mvs_ce}, and $\bfphi=-\vU$ in the momentum equation \eqref{mvs_me}. We sum them up and add the identity
 \begin{align*}
\frac 1 2 \intO{  |\vH|^2(\tau,\cdot)}+\intO{ p(r,\Theta)(\tau,\cdot)}&=\frac 1 2\intO{ |\vH|^2(0,\cdot)}  +\intO{ p(r,\Theta)(0,\cdot)}+\\
  &+ \int_0^\tau\intO{\vH\cdot\partial_t\vH}\dt+ \int_0^\tau\intO{\partial_t p(r,\Theta)}\dt.
 \end{align*}
This, combined with Gibb's equation \eqref{gibbs}, leads to
\begin{equation}
\begin{aligned}
&\bigg[\Enu(t)\bigg]_{t=0}^{t=\tau} +\mathcal{D}_{\Theta}(\tau) +\mathcal{D}_{\vH}(\tau)+\\
&+ \int_0^\tau \intO{ \lang	\Nu;\frac{1}{\vt} \left( \mathbb{S}(\vr,\vt, \Du) : \Du +\frac{\kappa(\vr,\vt)| \vc{D}_{\vt}|^2}{\vt} + \zeta (\vr,\vt) |\vc{C}_{\vB} |^2 \right) \rang \Theta} \dt +\\
 &\leq-\int_0^\tau \intO{\Big(\lang \Nu;   \vr s (\vr, \vt) \rang \partial_t \Theta +\lang \Nu; \vr s (\vr, \vt) \vu  -\frac{\kappa(\vr,\vt)\vc{D}_{\vt}}{ \vt}\rang \cdot \Grad \Theta                       \Big)} \dt+\\ 
& +\int_0^\tau \intO{\Big(\lang \Nu;   \vr  \rang s(r,\Theta)\partial_t \Theta +\lang \Nu; \vr  \vu \rang \cdot \Grad \Theta s(r,\Theta)  \Big)} \dt -&\\
	& -\int_0^\tau  \intO{ \Big( \lang \Nu; \vB \rang \cdot \partial_t \vH -  \lang \Nu; (\vB \times \vu) + \zeta (\vr,\vt) \vc{C}_{\vB} \rang \cdot \Curl \vH \Big) }\dt+ \int_0^\tau \intO{ \vH \cdot \partial_t \vH   } \dt +\\
	&+\int_0^\tau  \intO{\lang \Nu;(r-\vr)\rang   \frac{\partial_t p(r,\Theta)}{r} } -\int_0^\tau  \intO{\lang \Nu;\vr\vu\rang\cdot \left(  \frac{\Grad p(r,\Theta)}{r}\right) } -\\
		&- \int_0^\tau \intO{  \lang \Nu; \vr( \vu-\vU)\rang \cdot \Big(\partial_t\vU   +\vU\cdot\Grad\vU+\frac{1}{r}\Grad p(r,\Theta)\Big)}\dt +\\
		&+ \int_0^\tau \intO{  \lang \Nu; \vr(\vu-\vU)\rang \frac{1}{r}\Grad p(r,\Theta)}\dt+  \int_0^\tau \intO{  \lang \Nu; \mathbb{S}(\vr,\vt, \Du)\rang: \Grad \vU  }\dt  -\\
		&-\int_0^\tau \intO{\lang \Nu; \vr (\vu-\vU) \otimes (\vu-\vU) -\vB \otimes \vB + p(\vr,\vt)\mathbb{I}+ \frac{1}{2} |\vB |^2\mathbb{I} \rang : \Grad \vU }\dt-   \\
&-	\int_0^\tau \int_{\Ov{\Omega}}{ \Grad\vU: {\rm d}\mathfrak{R}^M \dx } \dt  -	\int_0^\tau \int_{\Ov{\Omega}}{ \Grad\vU: {\rm d}\mathfrak{R}^B\dx} \dt  
\end{aligned}
\end{equation}
From now on we suppose $(r,\Theta,\vU,\vH)$ is a strong solution to the full compressible MHD system given in Section~\ref{S-M} satisfying \eqref{smooth_cc} and emanating from the same initial data as DMV solution defined in Section~\ref{S-DMV}. This yields
\begin{align*}
-&\int_0^\tau  \intO{ \lang \Nu; \vr(\vu-\vU)\rang\cdot \left( \partial_t\vU + \vU\cdot\Grad\vU+\frac{1}{r}\Grad p(r,\Theta)\right)} \dt = \\
&=\int_0^\tau  \intO{ \lang \Nu;  \left(\frac{\vr}{r}-1\right)(\vU-\vu)\rang \cdot \left( \Div\mathbb{S}(r,\Theta,\mathbb{D}(\vU))+\Curl \vH \times \vH \right) }\dt + \\
&+\int_0^\tau  \intO{  \lang \Nu; (\vU- \vu)\rang\cdot\left(\Div\mathbb{S}(r,\Theta,\mathbb{D}(\vU))+\Curl \vH \times \vH \right) }\dt,
\end{align*}
and the above inequality reduces to 
\begin{equation}
\begin{aligned}
&\Enu(\tau)+\mathcal{D}_{\Theta}(\tau) +\mathcal{D}_{\vH}(\tau) 	+\\
&+ \int_0^\tau \intO{ \lang	\Nu;\frac{1}{\vt} \left( \mathbb{S}(\vr,\vt, \Du) : \Du +\frac{\kappa(\vr,\vt)| \vc{D}_{\vt}|^2}{\vt} + \zeta (\vr,\vt) |\vc{C}_{\vB} |^2 \right) \rang \Theta} \dt+  \\
&\leq -\int_0^\tau \intO{\Big(\lang \Nu;   \vr s (\vr, \vt) \rang \partial_t \Theta +\lang \Nu; \vr s (\vr, \vt) \vu  -\frac{\kappa(\vr,\vt)\vc{D}_{\vt}}{ \vt}\rang \cdot \Grad \Theta                       \Big)} \dt+\\ 
& +\int_0^\tau \intO{\Big(\lang \Nu;   \vr  \rang s(r,\Theta)\partial_t \Theta +\lang \Nu; \vr  \vu \rang \cdot \Grad \Theta s(r,\Theta)  \Big)} \dt +&\\
	&+\int_0^\tau  \intO{\lang \Nu;(r-\vr)\rang   \frac{\partial_t p(r,\Theta)}{r} } -\int_0^\tau  \intO{\lang \Nu;\vr\vu\rang\cdot \left(  \frac{\Grad p(r,\Theta)}{r}\right) } +\\
	&+\int_0^\tau  \intO{ \lang \Nu;  \left(\frac{\vr}{r}-1\right)(\vU-\vu)\rang\left( \Div\mathbb{S}(r\Theta,\mathbb{D}(\vU))+\Curl \vH \times \vH \right) }\dt + \\
&+\int_0^\tau  \intO{  \lang \Nu; (\vU- \vu)\rang\left(\Div\mathbb{S}(r,\Theta,\mathbb{D}(\vU))+\Curl \vH \times \vH \right) }\dt+\\
		&+ \int_0^\tau \intO{  \lang \Nu; \tvr(\vu-\vU)\rang \frac{1}{r}\Grad p(r,\Theta)}\dt + \int_0^\tau \intO{  \lang \Nu; \mathbb{S}(\vr,\vt, \Du)\rang: \Grad \vU  }\dt -\\
		&-\int_0^\tau \intO{\Big( \lang \Nu; \vr (\vu-\vU) \otimes (\vu-\vU) -\vB \otimes \vB \rang : \Grad \vU +\lang \Nu;  p(\vr,\vt)+ \frac{1}{2} |\vB |^2 \rang  \Div \vU \Big)}\dt-   \\
		&-\int_0^\tau  \intO{ \Big( \lang \Nu; \vB \rang \cdot \partial_t \vH -  \lang \Nu; (\vB \times \vu) + \zeta (\vr,\vt) \vc{C}_{\vB} \rang \cdot \Curl \vH \Big) }\dt+\int_0^\tau \intO{ \vH \cdot \partial_t \vH   } \dt-\\
&-\int_0^\tau \int_{\Ov{\Omega}}{ \Grad\vU: {\rm d}\mathfrak{R}^M \dx } \dt  -	\int_0^\tau \int_{\Ov{\Omega}}{ \Grad\vU: {\rm d}\mathfrak{R}^B\dx} \dt  .
\end{aligned}
\end{equation}
From the exact induction equation  we get the identity 
\begin{align*}
-&\int_0^\tau  \intO{ \Big( \lang \Nu; \vB \rang \cdot \partial_t \vH -  \lang \Nu; (\vB \times \vu) + \zeta (\vr,\vt) \vc{C}_{\vB} \rang \cdot \Curl \vH \Big) }\dt+\int_0^\tau \intO{ \vH \cdot \partial_t \vH   } \dt = \\
&=\int_0^\tau  \intO{    \lang \Nu; \vB -\vH \rang \cdot \left(\Curl(\vH\times\vU) + \Curl(\zeta (r,\Theta) \Curl\vH)\right) }\dt+\\
&+\int_0^\tau  \intO{  \lang \Nu; (\vB \times \vu) + \zeta (\vr,\vt) \vc{C}_{\vB} \rang \cdot \Curl \vH  }\dt.
\end{align*}
Further computation implies  
\begin{align*}
&\int_0^\tau  \intO{    \lang \Nu; \vB -\vH \rang \cdot \Curl(\vH\times\vU) }\dt\\
&= \int_0^\tau  \intO{    \lang \Nu; \vB  \rang \cdot \Curl(\vH\times\vU) }\dt - \int_0^\tau  \intO{    \lang \Nu; \vH \rang \cdot \Curl(\vH\times\vU) }\dt=\\
&=\int_0^\tau  \intO{    \lang \Nu; \vB  \rang \cdot \left(\vH \Div\vU -\vU\Div\vH + (\vU\cdot\nabla)\vH -(\vH\cdot\nabla)\vU\right) }\dt-\\
&- \int_0^\tau  \intO{    \lang \Nu;\Curl\vH \cdot (\vH\times\vU) \rang}\dt+\int_0^\tau  \int_{\partial\Omega}{\vH\times(\vH\times\vU) \cdot \vn }\ds\dt\\
&=\int_0^\tau  \intO{    \lang \Nu;   \left(\vH \cdot\vB\mathbb I -\vB\otimes\vH-\vH\otimes\vB \right)  \rang :\Grad\vU }\dt+\\
&+ \int_0^\tau  \intO{    \lang \Nu;  (\vU\times\vB)\cdot\Curl\vH \rang}\dt-\\
&- \int_0^\tau  \intO{    \lang \Nu;(\Curl\vH \times \vH) \cdot\vU \rang}\dt+\int_0^\tau  \int_{\partial\Omega}{\vH\times(\vH\times\vU) \cdot \vn }\ds\dt\\
&= \int_0^\tau  \intO{    \lang \Nu;   \left(\vH\otimes\vH -\frac{1}{2}|\vH|^2\mathbb{I}+\vH \cdot\vB\mathbb I -\vB\otimes\vH-\vH\otimes\vB \right)\rang :\Grad\vU }\dt+\\
&+\int_0^\tau  \intO{    \lang \Nu;  (\vU\times\vB)\cdot\Curl\vH \rang}\dt + \int_0^\tau  \intO{    \lang \Nu;\Div\vH(\vH\cdot\vU)\rang}\dt +\\
&-\int_0^\tau  \int_{\partial\Omega}{(\vH\cdot\vn)(\vH\cdot\vU)-\frac{1}{2}|\vH|^2(\vU\cdot\vn)}\ds\dt +\int_0^\tau  \int_{\partial\Omega}{\vH\times(\vH\times\vU) \cdot \vn }\ds\dt\\
&= \int_0^\tau  \intO{    \lang \Nu;  \left(\vH\otimes\vH -\frac{1}{2}|\vH|^2\mathbb{I}+\vH \cdot\vB\mathbb I -\vB\otimes\vH-\vH\otimes\vB \right) \rang:\Grad\vU }\dt+\\
&+\int_0^\tau  \intO{    \lang \Nu;  (\vU\times\vB)\cdot\Curl\vH \rang}\dt-\frac{1}{2}\int_0^\tau  \int_{\partial\Omega}{|\vH|^2(\vU\cdot\vn)}\ds\dt\\
&= \int_0^\tau  \intO{    \lang \Nu;    \left(\vH\otimes\vH -\frac{1}{2}|\vH|^2\mathbb{I}+\vH \cdot\vB\mathbb I -\vB\otimes\vH-\vH\otimes\vB \right)\rang :\Grad\vU }\dt+\\
&+ \int_0^\tau  \intO{    \lang \Nu;  (\vU\times\vB)\cdot\Curl\vH \rang}\dt,
\end{align*}
where we have used $\Div\vH=0$ and the homogeneous Dirichlet boundary condition \eqref{bc_v_D} for velocity.
Hence, 
\begin{align*}
&\int_0^\tau  \intO{    \lang \Nu; \vB \otimes\vB -\frac{1}{2}|\vB|^2\mathbb{I}\rang:\Grad\vU }\dt +\int_0^\tau  \intO{    \lang \Nu; \vU-\vu \rang\cdot(\Curl\vH\times\vH)}\dt+\\
&+\int_0^\tau  \intO{  \lang \Nu; (\vB \times \vu) \rang \cdot \Curl \vH  }\dt+\int_0^\tau  \intO{    \lang \Nu;  (\vU\times\vB)\cdot\Curl\vH \rang}\dt+\\
&+\int_0^\tau  \intO{    \lang \Nu; \left(\vH\otimes\vH -\frac{1}{2}|\vH|^2\mathbb{I}+\vH \cdot\vB\mathbb I -\vB\otimes\vH-\vH\otimes\vB \right)\rang :\Grad\vU }\dt \\
&= \int_0^\tau  \intO{    \lang \Nu; (\vB-\vH)\cdot\Grad\vU\cdot(\vB-\vH) - \frac{1}{2}\Div\vU|\vH-\vB|^2\rang}\dt-\\
&-\int_0^\tau  \intO{    \lang \Nu; (\vB-\vH)\times(\vU-\vu)\rang\cdot\Curl\vH}\dt.
\end{align*}
In addition, by \eqref{cc_B}, we have 
\begin{align*}
&\int_0^\tau  \intO{  \lang \Nu;  \zeta (\vr,\vt) \vc{C}_{\vB} \rang \cdot \Curl \vH  }\dt+\int_0^\tau  \intO{    \lang \Nu; \vB -\vH \rang \cdot \Curl(\zeta (r,\Theta) \Curl\vH) }\dt=\\
& =\int_0^\tau  \intO{  \lang \Nu;  \zeta (\vr,\vt) \vc{C}_{\vB} \rang \cdot \Curl \vH  }\dt+\int_0^\tau  \intO{  \zeta (r,\Theta) \Curl\vH\cdot \lang \Nu; \vc{C}_\vB -\Curl\vH \rang }\dt.
\end{align*}
The relative energy inequality now takes the form 
\begin{equation}\label{rei_1}
\begin{aligned}
&\Enu(\tau)+\mathcal{D}_{\Theta}(\tau) +\mathcal{D}_{\vH}(\tau)+\\
& +\int_0^\tau \intO{ \lang	\Nu;\frac{1}{\vt} \mathbb{S}(\vr,\vt, \Du) : \Du +\frac{\kappa(\vr,\vt)}{\vt}\vc{D}_{\vt}\left( \frac{\vc{D}_{\vt}}{\vt}-\frac{\Grad\Theta}{\Theta}\right) + \frac{\zeta (\vr,\vt)}{\vt} |\vc{C}_{\vB} |^2  \rang \Theta} \dt \\
&\leq  \int_0^\tau \intO{  \lang \Nu; \mathbb{S}(\vr,\vt, \Du)\rang: \Grad \vU  }\dt + \int_0^\tau  \intO{  \lang \Nu; (\vU- \vu)\rang\cdot\Div\mathbb{S}(r,\Theta,\mathbb{D}(\vU)) }\dt +\\
&+ \int_0^\tau  \intO{\lang \Nu;\left(1-\frac{\vr}{r}\right)\rang   \left(\partial_t p(r,\Theta)+\vU\cdot\Grad p(r,\Theta)\right) } -\\
& -\int_0^\tau \intO{r \lang \Nu;    s (\vr, \vt)-s(r,\Theta) \rang  \Big(\partial_t \Theta +\vU\cdot\Grad\Theta \Big)} \dt -\\
&-\int_0^\tau \intO{\lang \Nu;\vU\rang\cdot\Grad p(r,\Theta)}-\int_0^\tau \intO{\lang \Nu;  p(\vr,\vt)\rang  \Div \vU }+\\
&+\int_0^\tau  \intO{   \zeta (r,\Theta) \Curl\vH \cdot \lang \Nu; \vc{C}_\vB-\Curl\vH \rang  }\dt+\int_0^\tau  \intO{   \lang \Nu;  \zeta (\vr,\vt) \vc{C}_{\vB} \rang \cdot \Curl \vH }\dt-\\
&-	\int_0^\tau \int_{\Ov{\Omega}}{ \Grad\vU: {\rm d}\mathfrak{R}^M \dx } \dt  -	\int_0^\tau \int_{\Ov{\Omega}}{ \Grad\vU: {\rm d}\mathfrak{R}^B\dx} \dt  + R_1,
\end{aligned}
\end{equation}
where $R_1$ is given by
\begin{equation}\label{R1}
\begin{aligned}
R_1&=\int_0^\tau \intO{ \lang \Nu;(r-\vr)( s (\vr, \vt)-s(r,\Theta)) \rang  \Big(\partial_t \Theta +\vU\cdot\Grad\Theta \Big)} \dt 
+\\
& +\int_0^\tau \intO{\Big(\lang \Nu;   \vr (s (\vr, \vt)-s(r,\Theta))(\vU-\vu) \rang\cdot \Grad \Theta  }\dt      -\\
	&+\int_0^\tau  \intO{ \lang \Nu;  \left(\frac{\vr}{r}-1\right)(\vU-\widetilde{ \vu})\rang\cdot\left( \Div\mathbb{S}(r,\Theta,\mathbb{D}(\vU))+\Curl \vH \times \vH \right) }\dt - \\
			& - \int_0^\tau \intO{ \lang \Nu; \vr (\vu-\vU) \otimes (\vu-\vU) \rang : \Grad \vU }\dt			+\\
				&+\int_{0}^{\tau} \intO{\lang \Nu;(\vU-\vu)\times(\vB-\vH)\cdot\Curl\vH\rang }\dt +\\
				&+ \int_0^\tau  \intO{    \lang \Nu; (\vB-\vH)\cdot\Grad\vU\cdot(\vB-\vH) - \frac{1}{2}\Div\vU|\vH-\vB|^2\rang}\dt .
\end{aligned}
\end{equation}
Further, by Gibb's equation, the entropy and the continuity equations satisfied by the strong solution we derive the identity
\begin{align*}
&\lang\Nu;\left(1-\frac{\vr}{r} \right)\rang\left(\partial_t p(r,\Theta) + \vU\cdot\Grad p(r,\Theta) \right)+\Div\vU \lang\Nu;p(r,\Theta)-p(\vr,\vt)\rang= &\\
&=\Div\vU \lang\Nu; \left( p(r,\Theta) - \frac{\partial p(r,\Theta)}{\partial \vr}(r-\vr)- \frac{\partial p(r,\Theta)}{\partial \vt}(\Theta-\vt)-p(\vr,\vt)\right)\rang-\\
&-\lang\Nu;r(r-\vr)\rang\frac{\partial s(r,\Theta)}{\partial\vr}\left(\partial_t\Theta+\vU\cdot\Grad\Theta \right)-\lang\Nu;r(\Theta-\vt)\rang\frac{\partial s(r,\Theta)}{\partial\vt}\left(\partial_t\Theta+\vU\cdot\Grad\Theta \right)-\\
&+\lang\Nu;\Theta-\vt\rang\Div\left(\frac{\kappa(r,\Theta)\Grad\Theta}{\Theta}\right)+\\
&+\lang\Nu;\left(1-\frac{\vt}{\Theta}\right)\rang\left(\mathbb{S}(r,\Theta,\mathbb{D}(\vU)):\Grad\vU+\frac{\kappa(r,\Theta)|\Grad\Theta|^2}{\Theta}+\zeta(r,\Theta)|\Curl\vH|^2 \right).
\end{align*}
Using \eqref{cc_S}, \eqref{cc_t} and rearranging the terms in a suitable way, the inequality \eqref{rei_1} becomes
 \begin{equation}
\begin{aligned}\label{rei_2}
&\Enu(\tau)+\mathcal{D}_{\Theta}(\tau) +\mathcal{D}_{\vH}(\tau) +\\
&+\int_0^\tau \intO{ \lang	\Nu;\frac{\Theta}{\vt} \mathbb{S}(\vr,\vt, \Du):\Du \rang +\lang \Nu; \frac{\vt}{\Theta}\mathbb{S}(r,\Theta,\mathbb{D}(\vU)):\Grad\vU\rang }\dt +\\
&+ \int_0^\tau \intO{ \Theta\lang	\Nu; \frac{\kappa(\vr,\vt)\vc{D}_{\vt}}{\vt}\cdot\left( \frac{\vc{D}_{\vt}}{\vt}-\frac{\Grad\Theta}{\Theta}\right)\rang+ \frac{\kappa(r,\Theta)\Grad\Theta}{\Theta}\cdot\lang\Nu;\vt\left(\frac{\Grad\Theta}{\Theta}-\frac{\mathbf{D}_\vt}{\vt}\right)\rang}\dt+&\\
&+ \int_0^\tau \intO{ \lang	\Nu;\frac{\zeta (\vr,\vt)\Theta}{\vt} |\vc{C}_{\vB} |^2  \rang +\lang\Nu;\frac{\zeta(r,\Theta)\vt}{\Theta}|\Curl\vH|^2\rang} \dt - \\
&- \int_0^\tau \intO{  \lang \Nu; \mathbb{S}(\vr,\vt, \Du)\rang: \Grad \vU  }\dt  - \int_0^\tau  \intO{  \lang \Nu;\Du\rang\cdot\mathbb{S}(r,\Theta,\mathbb{D}(\vU)) }\dt-\\
&-\int_0^\tau  \intO{   \zeta (r,\Theta) \Curl\vH \cdot \lang \Nu; \vc{C}_\vB\rang  }\dt-\int_0^\tau  \intO{   \lang \Nu;  \zeta (\vr,\vt) \vc{C}_{\vB} \rang \cdot \Curl \vH }\dt\\
&\leq  R_2,
\end{aligned}
\end{equation}
where $R_2$ reads
\begin{equation}
\begin{aligned}\label{R2}
R_2&=
\int_0^\tau \intO{\Div\vU \lang\Nu; \left( p(r,\Theta) - \frac{\partial p(r,\Theta)}{\partial \vr}(r-\vr)- \frac{\partial p(r,\Theta)}{\partial \vt}(\Theta-\vt)-p(\vr,\vt)\right)\rang}\dt-\\
& -\int_0^\tau \intO{r \lang \Nu;  (r-\vr) \frac{\partial s(r,\Theta)}{\partial\vr}+(\Theta-\vt)\frac{\partial s(r,\Theta)}{\partial\vt}\rang  \Big(\partial_t \Theta +\vU\cdot\Grad\Theta \Big)} \dt -\\
&- \int_0^\tau \intO{ \lang \Nu;\vr( s (\vr, \vt)-s(r,\Theta)) \rang  \Big(\partial_t \Theta +\vU\cdot\Grad\Theta \Big)} \dt 
+\\
& +\int_0^\tau \intO{\Big(\lang \Nu;   \vr (s (\vr, \vt)-s(r,\Theta))(\vU-\vu) \rang\cdot \Grad \Theta  }\dt      -\\
	&+\int_0^\tau  \intO{ \lang \Nu;  \left(\frac{\vr}{r}-1\right)(\vU-\widetilde{ \vu})\rang\cdot\left( \Div\mathbb{S}(r,\Theta,\mathbb{D}(\vU))+\Curl \vH \times \vH \right) }\dt - \\
			& - \int_0^\tau \intO{ \lang \Nu; \vr (\vu-\vU) \otimes (\vu-\vU) \rang : \Grad \vU }\dt			+\\
				&+\int_{0}^{\tau} \intO{\lang \Nu;(\vU-\vu)\times(\vB-\vH)\cdot\Curl\vH\rang }\dt +\\
				&+ \int_0^\tau  \intO{    \lang \Nu; (\vB-\vH)\cdot\Grad\vU\cdot(\vB-\vH) - \frac{1}{2}\Div\vU|\vH-\vB|^2\rang}\dt +\\
& + c\int_0^\tau \left(\mathcal{D}_{\Theta}(t)+\mathcal{D}_{\vH}(t)\right) \dt.
\end{aligned}
\end{equation}
Note that we have used \eqref{cd_m} and \eqref{cd_b} to get 
\begin{align*}
-	\int_0^\tau \int_{\Ov{\Omega}}{ \Grad\vU: {\rm d}\mathfrak{R}^M \dx } \dt  -	\int_0^\tau \int_{\Ov{\Omega}}{ \Grad\vU: {\rm d}\mathfrak{R}^B\dx} \dt  \leq  c\int_0^\tau \left(\mathcal{D}_{\Theta}(t)+\mathcal{D}_{\vH}(t)\right) \dt.
\end{align*}
We further rewrite the integrals on the left-hand side of \eqref{rei_2} using Newton's rheological law \eqref{S}
and straightforward calculations, 
\begin{align*}
&\Enu(\tau)+ \mathcal{D}_{\Theta}(\tau) +\mathcal{D}_{\vH}(\tau)+\int_0^\tau\intO{\lang\Nu; \frac{\mu(\vr,\vt)}{2}\left|\sqrt{\frac{\Theta}{\vt}}\mathbb{T}[\Du]:-\sqrt{\frac{\vt}{\Theta}}\mathbb{T}[\mathbb{D}(\vU)]\right|^2 \rang}\dt -\\
&- \frac 1 2\int_0^\tau\intO{\mathbb{T}[\mathbb{D}(\vU)]:\lang\Nu; \left(\mu(\vr,\vt)-\mu(r,\Theta)\right)\left(\frac{\vt}{\Theta}\mathbb{T}[\mathbb{D}(\vU)]-\mathbb{T}[\Du]\right)\rang}\dt+\\
&+\int_0^\tau\intO{\lang\Nu;\eta(\vr,\vt)\left|\sqrt{\frac{\Theta}{\vt}}\tr(\Du):-\sqrt{\frac{\vt}{\Theta}}\Div\vU \right|^2\rang}\dt-\\
&- \int_0^\tau\intO{\Div\vU\lang\Nu;\left(\eta(\vr,\vt)-\eta(r,\Theta) \right)\left(\frac{\vt}{\Theta}\Div\vU-\tr(\Du)\right)\rang}\dt +\\
&+ \int_0^\tau\intO{\Theta\lang\Nu;\kappa(\vr,\vt)\left|\frac{\mathbf{D}_\vt}{\vt}-\frac{\Grad\Theta}{\Theta}\right|^2\rang+\kappa(r,\Theta)\frac{\Grad\Theta}{\Theta}\cdot\lang\Nu;(\vt-\Theta)\left(\frac{\Grad\Theta}{\Theta}-\frac{\mathbf{D}_\vt}{\vt}\right)\rang}\dt-\\
&-\int_0^\tau\intO{\Grad\Theta\cdot\lang\Nu;\left(\kappa(\vr,\vt)-\kappa(r,\Theta)\right)\left(\frac{\Grad\Theta}{\Theta}-\frac{\mathbf{D}_\vt}{\vt}\right)\rang}\dt+\\
&+ \int_0^\tau \intO{ \lang	\Nu;\zeta (\vr,\vt)\left|\sqrt{\frac{\Theta}{\vt}}\vc{C}_{\vB}-\sqrt{\frac{\vt}{\Theta}}\Curl\vH \right|^2  \rang}\dt -\\
&-\int_0^\tau\intO{\Curl\vH\cdot\lang\Nu;(\zeta(\vr,\vt)-\zeta(r,\Theta))\left(\frac{\vt}{\Theta}\Curl\vH-\vc{C}_\vB\right)\rang} \dt 
\leq R_2
\end{align*}
with $R_2$ given in \eqref{R2}. 

\subsection{Suitable form of relative energy inequality}

Following the approach of \cite{FeiNoSL} we further introduce a cut-off function 
\begin{align*}
\psi_\delta \in C^\infty_c((0,\infty)^2), \quad 0\leq &\psi_\delta(\vr,\vt)\leq 1,\\ &\psi_\delta(\vr,\vt)=1 \quad \mbox{whenever} \quad \delta <\vr < \frac{1}{\delta} \mbox{ and } \delta < \vt<\frac{1}{\delta}  \mbox{ for some } \delta>0.
\end{align*} 
Any measurable function $h(\vr,\vu,\vt,\vB,\Du,\DT,\vc{C}_\vB)$ can be written as $h=[h]_{\rm ess}+[h]_{\rm res},$ where  
\begin{align*}
[h]_{\rm ess}&=\psi_\delta(\vr,\vt)h(\vr,\vu,\vt,\vB,\Du,\DT,\vc{C}_\vB),\quad
[h]_{\rm res}=(1-\psi_\delta(\vr,\vt))h(\vr,\vu,\vt,\vB,\Du,\DT,\vc{C}_\vB)
\end{align*}
are referred to as  the {\it essential} and {\it residual} component of a function $h,$ respectively.
Under the assumption on thermodynamic functions being related through Gibb's equation \eqref{gibbs} and satisfying the hypothesis of thermodynamic stability \eqref{hts} one can show that   
\begin{equation}
\begin{aligned}\label{re_lb}
 E(\vr,\vt,\vu,\vB \; | \; r,\Theta,\vU,\vH ) 
\geq  c(\delta)&\bigg\{
|[\vr-r]_{\rm ess}|^2 +|[\vt-\Theta]_{\rm ess}|^2+|[\vu-\vU]_{\rm ess}|^2+|[\vB-\vH]_{\rm ess}|^2+ \bigg.\\
&\bigg. + 1_{\rm res}+[\vr]_{\rm res}+[\vr|s(\vr,\vt)|]_{\rm res}+[\vr e(\vr,\vt)]_{\rm res}+[\vr|\vu|^2]_{\rm res}+[|\vB|^2]_{\rm res}
\bigg\}.
 \end{aligned}
 \end{equation}
The analogous bound for the compressible Navier-Stokes-Fourier system was shown in \cite[Proposition 3.2]{FeiNoSL}.

Let us denote the nine integrals of $R_2$ in \eqref{R2}
by $r_i,$ $i=1, \ldots,9.$ In what follows we estimate them step by step. 
For the first integral we get by  \eqref{re_lb} that
\begin{align*}
r_1 & \leq c\int_0^\tau \intO{\lang\Nu; \left| p(r,\Theta) - \frac{\partial p(r,\Theta)}{\partial \vr}(r-\vr)- \frac{\partial p(r,\Theta)}{\partial \vt}(\Theta-\vt)-p(\vr,\vt)\right|\rang}\dt \\
& \leq c\int_0^\tau \intO{\lang\Nu;|[\vr-r]_{\rm ess}^2|+|[\vt-\Theta]_{\rm ess}|^2 \rang}\dt + c\int_0^\tau \intO{\lang\Nu;[1+\vr+\vt+|p(\vr,\vt)|]_{\rm res}\rang}\dt\\
&\leq c \int_0^\tau \Enu(\tau)\dt + c\int_0^\tau \intO{\lang\Nu;[\vt+|p(\vr,\vt)|]_{\rm res}\rang}\dt.
\end{align*}
The estimate of the second and the third integral reads
\begin{align*}
r_2&+r_3
=-\int_0^\tau \intO{\lang\Nu;\vr[s(\vr,\vt)-s(r,\Theta)]_{\rm res}\rang\left( \partial_t\Theta+\vU\cdot\Grad\Theta\right)}\dt - \\
&- \int_0^\tau \intO{\lang\Nu; (\vr-r)[s(\vr,\vt)-s(r,\Theta)]_{\rm ess} \rang \left( \partial_t\Theta+\vU\cdot\Grad\Theta\right)}\dt-\\
&- \int_0^\tau \intO{\lang\Nu;  r\left[s(\vr,\vt)+(r-\vr)\frac{\partial s(r,\Theta)}{\partial \vr}+(\Theta-\vt)\frac{\partial s(r,\Theta)}{\partial \vt}-s(r,\Theta)\right]_{\rm ess}\rang \left( \partial_t\Theta+\vU\cdot\Grad\Theta\right)}\dt-\\
&-\int_0^\tau \intO{\lang\Nu;  r\left[(r-\vr)\frac{\partial s(r,\Theta)}{\partial \vr}+(\Theta-\vt)\frac{\partial s(r,\Theta)}{\partial \vt}\right]_{\rm res}\rang \left( \partial_t\Theta+\vU\cdot\Grad\Theta\right)}\dt \\
&\leq c\int_0^\tau \intO{\lang\Nu;|[\vr-r]_{\rm ess}|^2+|[\vt-\Theta]_{\rm ess}|^2\rang}\dt + c \int_0^\tau \intO{\lang\Nu;\left[1+\vr |s (\vr,\vt)|+\vr+\vt\right]_{\rm res}\rang}\dt\\
& \leq c\int_0^\tau \Enu(t)\dt+c\int_0^\tau\intO{\lang\Nu; [\vt]_{\rm res}\rang}\dt.
\end{align*}
For the fourth and the fifth integral we have
\begin{align*}
r_4 & \leq c\int_0^\tau \intO{\lang \Nu; \vr|s(\vr,\vt)-s(r,\Theta)| |\vu-\vU|\rang} \dt\\
&\leq  c\int_0^\tau \intO{\lang\Nu;|[\vr-r]_{\rm ess}|^2+|[\vt-\Theta]_{\rm ess}|^2+|[\vu-\vU]_{\rm ess}|^2\rang +\lang\Nu;[\vr+\vr|s(\vr,\vt)|]_{\rm res}|[\vu]_{\rm res}|\rang}\dt\\
&\leq \int_0^\tau \Enu(t)\dt +  c \int_0^\tau \intO{\lang\Nu;[\vr+\vr|s(\vr,\vt)|]_{\rm res}|[\vu]_{\rm res}|\rang}\dt,\\
r_5 &\leq c \int_{0}^{\tau} \intO{\lang \Nu; |\vr-r||\vu-\vU|\rang}\dt \\
&\leq  c\int_0^\tau \intO{\lang\Nu;|[\vr-r]_{\rm ess}|^2+|[\vu-\vU]_{\rm ess}|^2\rang +\lang\Nu;[1+\vr]_{\rm res}|[\vu-\vU]_{\rm res}|\rang}\dt\\
&\leq \int_0^\tau \Enu(t)\dt + c \int_0^\tau \intO{\lang\Nu;[1+\vr]_{\rm res}|[\vu-\vU]_{\rm res}|\rang}\dt.
\end{align*}
The sixth integral can be directly bounded as
\begin{align*}
r_6 & \leq c\int_0^\tau\intO{\lang\Nu;\frac{1}{2}\vr|\vu-\vU|^2\rang}\dt \leq   c\int_0^\tau \Enu(t)\dt.
\end{align*}
For the two integrals containing magnetic induction we get
\begin{align*}
r_7 &\leq c \int_{0}^{\tau} \intO{\lang \Nu;|\vU-\vu||\vB-\vH|\rang}\dt  \\
&\leq \intO{\lang \Nu;|[\vU-\vu]_{\rm ess}|^2 + |[\vB-\vH]_{\rm ess}|^2\rang + \lang \Nu;[1+|\vB|]_{\rm res}|[\vu-\vU]_{\rm res}|\rang}\dt\\
&\leq c\int_0^\tau \Enu(t) \dt+ \intO{\lang \Nu;[1+|\vB|]_{\rm res}|[\vu-\vU]_{\rm res}|\rang}\dt,\\
r_8 & \leq c\int_{0}^{\tau} \intO{\lang \Nu;\frac{1}{2}|\vB-\vH|^2\rang}\dt \leq  c\int_0^\tau \Enu(t) \dt.
\end{align*}
To conclude, let us denote 
\begin{align*}
\mathcal{H}(\tau)=\Enu(\tau)+\mathcal{D}_\Theta(\tau)+\mathcal{D}_\vH(\tau).
\end{align*}  
The relative energy inequality finally takes the form 
\begin{equation}
\begin{aligned}
&\mathcal{H}(\tau)+\int_0^\tau\intO{\lang\Nu; \frac{\mu(\vr,\vt)}{2}\left|\sqrt{\frac{\Theta}{\vt}}\mathbb{T}[\Du]-\sqrt{\frac{\vt}{\Theta}}\mathbb{T}[\mathbb{D}(\vU)]\right|^2 \rang}\dt -\\
&- \frac 1 2\int_0^\tau\intO{\mathbb{T}[\mathbb{D}(\vU)]:\lang\Nu; \left(\mu(\vr,\vt)-\mu(r,\Theta)\right)\left(\frac{\vt}{\Theta}\mathbb{T}[\mathbb{D}(\vU)]-\mathbb{T}[\Du]\right)\rang}\dt+\\
&+\int_0^\tau\intO{\lang\Nu;\eta(\vr,\vt)\left|\sqrt{\frac{\Theta}{\vt}}\tr[\Du]-\sqrt{\frac{\vt}{\Theta}}\Div\vU \right|^2\rang}\dt-\\
&- \int_0^\tau\intO{\Div\vU\lang\Nu;\left(\eta(\vr,\vt)-\eta(r,\Theta) \right)\left(\frac{\vt}{\Theta}\Div\vU-\tr[\Du]\right)\rang}\dt +\\
&+ \int_0^\tau\intO{\Theta\lang\Nu;\kappa(\vr,\vt)\left|\frac{\mathbf{D}_\vt}{\vt}-\frac{\Grad\Theta}{\Theta}\right|^2\rang+\kappa(r,\Theta)\frac{\Grad\Theta}{\Theta}\cdot\lang\Nu;(\vt-\Theta)\left(\frac{\Grad\Theta}{\Theta}-\frac{\mathbf{D}_\vt}{\vt}\right)\rang}\dt-\\
&-\int_0^\tau\intO{\Grad\Theta\cdot\lang\Nu;\left(\kappa(\vr,\vt)-\kappa(r,\Theta)\right)\left(\frac{\Grad\Theta}{\Theta}-\frac{\mathbf{D}_\vt}{\vt}\right)\rang}\dt+\\
&+ \int_0^\tau \intO{ \lang	\Nu;\zeta (\vr,\vt)\left|\sqrt{\frac{\Theta}{\vt}}\vc{C}_{\vB}-\sqrt{\frac{\vt}{\Theta}}\Curl\vH \right|^2  \rang}\dt -\\
&-\int_0^\tau\intO{\Curl\vH\cdot\lang\Nu;(\zeta(\vr,\vt)-\zeta(r,\Theta))\left(\frac{\vt}{\Theta}\Curl\vH-\vc{C}_\vB\right)\rang} \dt \\
&\leq c\int_0^\tau \mathcal{H}(t)\dt +\int_0^\tau\intO{\lang\Nu;\left[\vt+|p(\vr,\vt)|+|\vu-\vU|+\vr|s(\vr,\vt)||\vu|+|\vB||\vu-\vU|\right]_{\rm res}\rang}\dt.
\end{aligned}\label{rei_final}
\end{equation}
So far we have derived the relative energy inequality using Gibb's equation \eqref{gibbs} and assuming that $(r,\Theta,\vU,\vH)$ satisfying \eqref{smooth_cc} is a strong solution to the MHD system with the same initial and boundary data as dissipative measure-valued solution  from Definition~\ref{D:mvs}. It means we have just proven the statement of the  following lemma.

\begin{Lemma}[{\bf Relative energy inequality for MHD system}]\label{L:rei}
Let the coefficients $\mu,$ $\eta,$ $\kappa$ and $\zeta$ be continuously differentiable functions of  $(\vr,\vt)\in(0,\infty)^2.$   Let the thermodynamic functions $p$, $e$ and $s$ be continuously differentiable functions of $(\vr,\vt)\in(0,\infty)^2$ satisfying Gibb's equation \eqref{gibbs} and the hypothesis of thermodynamic stability \eqref{hts}. Let $(r,\Theta,\vU,\vH)$ be a sufficiently regular strong solution to the MHD system \eqref{mhd_ce}-\eqref{mhd_ie}, \eqref{mhd_eb} with \eqref{S}, \eqref{F}, with the initial state \eqref{ic}, and satisfying boundary conditions \eqref{bc_v_D},  \eqref{bc_t_D}, \eqref{bc_t_N}, \eqref{bc_m_D} and \eqref{bc_m_N}. Let $\mathcal{V}$ be a dissipative measure-valued solution of the same problem according to Definition~\ref{D:mvs}.

Then the relative energy inequality \eqref{rei_final} holds true.
\end{Lemma}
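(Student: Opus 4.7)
The plan is essentially to assemble the inequality from the three basic relations available in Definition~\ref{D:mvs} (ballistic energy, continuity, momentum, induction) by testing them against well-chosen functionals built from the strong solution $(r,\Theta,\vU,\vH)$, and then to reorganize the resulting terms so that everything on the left-hand side is either $\Enu$ itself or a non-negative quadratic form in the deviations, while everything else is controlled via the essential/residual splitting.

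First I would start with the ballistic energy inequality \eqref{mvs_bei} with test functions $\tvt=\Theta$ and $\tvB=\vH$ (both satisfy the required boundary compatibilities by \eqref{smooth_cc}). To it I would add: (i) the continuity equation \eqref{mvs_ce} tested by $\varphi=\tfrac12|\vU|^2-\bigl(e(r,\Theta)-\Theta s(r,\Theta)+p(r,\Theta)/r\bigr)$; (ii) the momentum equation \eqref{mvs_me} tested by $\bfphi=-\vU$ (note $\vU|_{\partial\Omega}=0$); (iii) the deterministic identity for $\tfrac12\|\vH\|_{L^2}^2+\int p(r,\Theta)$ integrated in time. After using Gibbs' equation \eqref{gibbs} to replace $\partial_t p(r,\Theta)$ in terms of the time derivatives of $\Theta,\vr,s$, one reaches the preliminary inequality displayed before \eqref{rei_1}. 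This is pure bookkeeping: the only mildly delicate point is to check that the boundary trace terms arising from integration by parts in $\vU$ vanish thanks to \eqref{bc_v_D}.

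The second block uses that $(r,\Theta,\vU,\vH)$ is a classical solution. This lets me replace $\partial_t\vU+\vU\cdot\Grad\vU+\tfrac1r\Grad p(r,\Theta)$ by $\tfrac1r\bigl(\Div\mathbb{S}(r,\Theta,\mathbb{D}(\vU))+\Curl\vH\times\vH\bigr)$ and, similarly, to convert the induction-type terms into the identity for $\int\lang\Nu;\vB-\vH\rang\cdot(\Curl(\vH\times\vU)+\Curl(\zeta\Curl\vH))$ shown in the excerpt. The magnetic manipulation is the trickiest step: I would expand $\Curl(\vH\times\vU)$ via the standard identity, integrate by parts (using $\Div\vH=0$ and $\vU|_{\partial\Omega}=0$ to kill the boundary traces), and recognize the resulting bilinear form as $(\vB-\vH)\cdot\Grad\vU\cdot(\vB-\vH)-\tfrac12\Div\vU|\vB-\vH|^2$ plus a term coupling $(\vU-\vu)\times(\vB-\vH)$ with $\Curl\vH$. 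The compatibility identities \eqref{cc_S}, \eqref{cc_t}, \eqref{cc_B} are then invoked to pair viscous, conductive and magnetic dissipative terms with their strong-solution counterparts, yielding the squares that appear on the left of \eqref{rei_final}. The concentration-defect terms are absorbed using \eqref{cd_m}, \eqref{cd_b}.

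Finally, I would perform the essential/residual decomposition via the cut-off $\psi_\delta$, apply the lower bound \eqref{re_lb} on the relative energy, and estimate the remainder integrals $r_1,\dots,r_9$ collected in \eqref{R2} one by one, exactly as indicated in the text: second-order Taylor expansions of $p$ and $s$ around $(r,\Theta)$ give quadratic control on the essential part by $E$, while crude bounds suffice on the residual part since these are controlled by the residual pieces of $\vr,\vr|s|,\vt,|\vB|^2,\vr|\vu|^2$ in \eqref{re_lb}. The main obstacle I anticipate is the magnetic manipulation in the second block: keeping signs right through the double vector-calculus identity and verifying that every boundary integral indeed vanishes under the mixed boundary conditions \eqref{bc_v_D}, \eqref{bc_m_D}, \eqref{bc_m_N}; once this is clean, the remaining estimates for $r_1,\dots,r_9$ are routine and close the proof of \eqref{rei_final}.
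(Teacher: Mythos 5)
Your proposal follows essentially the same route as the paper: testing the ballistic energy inequality with $\tvt=\Theta$, $\tvB=\vH$, the continuity equation with $\varphi=\tfrac12|\vU|^2-\bigl(e(r,\Theta)-\Theta s(r,\Theta)+p(r,\Theta)/r\bigr)$, the momentum equation with $\bfphi=-\vU$, adding the deterministic identity for $\tfrac12\int|\vH|^2+\int p(r,\Theta)$, then using the strong-solution equations, the vector-calculus expansion of $\Curl(\vH\times\vU)$, the compatibility conditions \eqref{cc_S}--\eqref{cc_B}, the defect bounds \eqref{cd_m}--\eqref{cd_b}, and finally the essential/residual splitting with \eqref{re_lb} to estimate the remainder terms of $R_2$. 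This matches the paper's derivation in Section~\ref{S-REI} step for step, including the identification of the magnetic manipulation as the delicate point, so the proposal is correct and not a different approach.
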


\section{DMV-strong uniqueness principle}\label{S-UP}

This section contains the main result: (dissipative measure-valued)-strong uniqueness principles for the DMV solution of the full compressible  magnetohydrodynamic system. 
Our aim is to apply the Gronwall lemma to inequality \eqref{rei_final} to conclude that the two solutions coincide. It is enough to show that, under certain assumptions, all terms on the right-hand side of \eqref{rei_final} can be either absorb by the non-negative terms on the left-hand side or controlled by the relative energy.  

\subsection{Conditional DMV-strong uniqueness}

Let us begin with the {\it conditional} results, meaning that additional assumptions on the dissipative measure-valued solution have to be imposed in order to show that the DMV and  strong solutions coincide on the lifespan of the latter when emanating from the same initial data.

\subsubsection{Assuming bounded measure-valued solution}

\begin{Theorem}[{\bf Conditional result: bounded DMV}]\label{T:c1}
Let the coefficients $\mu,$ $\eta,$ $\kappa$ and $\zeta$ be continuously differentiable functions of $(\vr,\vt)\in(0,\infty)^2$ such that 
$
\mu>0, \ \eta \geq 0, \ \kappa>0, \ \zeta>0.
$
Let the thermodynamic functions $p$, $e$ and $s$ be continuously differentiable functions of $(\vr,\vt)\in(0,\infty)^2$ satisfying Gibb's equation \eqref{gibbs} and the hypothesis of thermodynamic stability \eqref{hts}. Let $(r,\Theta,\vU,\vH)$ be a strong solution to the MHD system satisfying the regularity assumptions \eqref{smooth_cc} and emanating from the initial data \eqref{ic}. Let $\mathcal{V}$ be a DMV solution of the same problem according to Definition~\ref{D:mvs}. Let us assume that 
\begin{align}\label{ass_bmvs}
\Nu\left\{ 0< \underline{\vr} < \vr < \overline{\vr}, \  0< \underline{\vt} < \vt < \overline{\vt}\right\}=1 \ \mbox{ for a.e. } (t,x)\in (0,T)\times\Omega 
\end{align}
for some constants $\underline{\vr},$ $\overline{\vr},$ $\underline{\vt},$ $\overline{\vt} >0,$ 
and the initial data coincide, i.e., 
\begin{align*}
\mathcal{V}_{0,x}=\delta_{[\vr_0,\vu_0,\vt_0,\vB_0]} \ \mbox{ for a.e. } x\in \Omega.
\end{align*} 

Then 
\begin{align*}
\Nu=\delta_{[r(t,x),\vU(t,x),\Theta(t,x),\vH(t,x),\mathbb{D}(\vu)(t,x),\Grad\Theta(t,x),\Curl\vH(t,x)]} \ \mbox{ for a.e. } (t,x)\in (0,T)\times\Omega.
\end{align*}
\end{Theorem}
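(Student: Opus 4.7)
The strategy is to apply Grönwall's lemma to the relative energy inequality \eqref{rei_final} from Lemma~\ref{L:rei}. First I observe that under the boundedness assumption \eqref{ass_bmvs} one may choose $\delta > 0$ small enough that $[\underline{\vr},\overline{\vr}] \cup [\underline{\vt},\overline{\vt}] \subset (\delta, 1/\delta)$, so that $\psi_\delta(\vr,\vt) \equiv 1$ on the support of $\Nu$. Every residual component $[\,\cdot\,]_{\rm res}$ then vanishes $\Nu$-almost surely, and the right-hand side of \eqref{rei_final} collapses to $c\int_0^\tau \mathcal{H}(t)\dt$.

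Next, I estimate the ``cross'' terms appearing on the left-hand side of \eqref{rei_final}, i.e.\ those coupling the dissipation defects of the measure-valued and the strong solution through the coefficient differences $\mu(\vr,\vt)-\mu(r,\Theta)$, $\eta(\vr,\vt)-\eta(r,\Theta)$, $\kappa(\vr,\vt)-\kappa(r,\Theta)$ and $\zeta(\vr,\vt)-\zeta(r,\Theta)$. Each such term is split by Young's inequality into (i) an $\varepsilon$-small multiple of the corresponding quadratic dissipation square (e.g.\ $\lang\Nu;\mu(\vr,\vt)|\sqrt{\Theta/\vt}\,\mathbb{T}[\Du]-\sqrt{\vt/\Theta}\,\mathbb{T}[\mathbb{D}(\vU)]|^2\rang$), to be absorbed into the left-hand side, and (ii) a remainder that, using the Lipschitz bound $|\mu(\vr,\vt)-\mu(r,\Theta)| \leq c(|\vr-r|+|\vt-\Theta|)$ available on the compact set $[\underline{\vr},\overline{\vr}]\times[\underline{\vt},\overline{\vt}]$, together with the smoothness of $r,\Theta,\vU,\vH$ assumed in \eqref{smooth_cc}, is bounded by $c(|[\vr-r]_{\rm ess}|^2+|[\vt-\Theta]_{\rm ess}|^2)$, and hence by $c\,\Enu(t)$ via the lower bound \eqref{re_lb}. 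The same reasoning applies to the $\kappa$-cross term involving the weight $(\vt-\Theta)(\Grad\Theta/\Theta-\DT/\vt)$ and to the magnetic cross term involving $\Curl\vH\cdot(\vt/\Theta\,\Curl\vH-\vc{C}_{\vB})$.

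Combining these estimates yields the Grönwall-type inequality
\begin{equation*}
\mathcal{H}(\tau) \leq c\int_0^\tau \mathcal{H}(t)\dt + \mathcal{H}(0).
\end{equation*}
Since the initial datum is a Dirac mass equal to the common initial state, $\Enu(0)=0$; passing $\tau\to 0^+$ in the ballistic energy inequality \eqref{mvs_bei} with $\tvt=\Theta$, $\tvB=\vH$ shows that $\mathcal{D}_{\Theta}(0^+)=\mathcal{D}_{\vH}(0^+)=0$, so $\mathcal{H}(0)=0$. Grönwall's lemma then forces $\mathcal{H}(\tau)\equiv 0$ on $[0,T]$. By \eqref{re_lb}, the identity $\Enu(\tau)=0$ implies $\vr=r,\ \vt=\Theta,\ \vu=\vU,\ \vB=\vH$ $\Nu$-almost surely for a.e.\ $(t,x)$. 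The vanishing of the non-negative quadratic dissipation terms on the left-hand side of \eqref{rei_final}, combined with the strict positivity of $\mu,\kappa,\zeta$ and the Korn--Poincaré inequality \eqref{KP} applied with $\wt{\vu}=\vU$, then identifies the remaining Young-measure variables $\Du,\DT,\vc{C}_{\vB}$ with $\mathbb{D}(\vU),\Grad\Theta,\Curl\vH$ respectively, yielding the announced Dirac form of $\Nu$.

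The principal technical difficulty is the systematic absorption of the coefficient cross terms: one must verify that each one factorises as the product of a quantity controlled by $\Enu$ and a quantity whose square is already present, with the proper positive weight, on the left-hand side of \eqref{rei_final}. The boundedness hypothesis \eqref{ass_bmvs} is precisely what enables this splitting, since without a uniform lower and upper bound on $\vr$ and $\vt$ the Lipschitz control of the transport coefficients would fail and the remainders could no longer be reduced to essential $L^2$ quantities dominated by the relative energy.
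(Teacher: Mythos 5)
Your proposal is correct and follows essentially the same route as the paper: residual terms vanish by the boundedness hypothesis, the coefficient cross terms are split by Young's inequality into an absorbable $\varepsilon$-part and a remainder controlled by $|[\vr-r]_{\rm ess}|^2+|[\vt-\Theta]_{\rm ess}|^2\leq c\,\Enu$ via Lipschitz continuity of $\mu,\eta,\kappa,\zeta$ on the compact set, and Gr\"onwall concludes. You additionally spell out the final identification of $\Du,\vc{D}_{\vt},\vc{C}_{\vB}$ from the vanishing dissipation terms and the Korn--Poincar\'e inequality, a step the paper leaves implicit.
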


\begin{proof}
Assuming \eqref{smooth_cc} and \eqref{ass_bmvs}  inequality \eqref{re_lb} yields 
\begin{align}\label{cr_aux1}
\Enu(\tau) \geq
c \intO{\lang  \mathcal{V}_{\tau,x}; 
|\vr-r|^2+|\vt-\Theta|^2+|\vu-\vU|^2+|\vB-\vH|^2\rang}\dt,
\end{align}
see \cite[Proposition 1]{FeiDCDS} and \cite[Proposition 3.2]{FeiNoSL} for more details. For the integrals on the left-hand side of \eqref{rei_final} we now get 
\begin{equation}
\begin{aligned}\label{cr_aux2}
&\int_0^\tau\intO{\lang\Nu; \frac{\mu(\vr,\vt)}{2}\left|\sqrt{\frac{\Theta}{\vt}}\mathbb{T}[\Du]-\sqrt{\frac{\vt}{\Theta}}\mathbb{T}[\mathbb{D}(\vU)]\right|^2 \rang +\lang\Nu;\eta(\vr,\vt)\left|\sqrt{\frac{\Theta}{\vt}}\tr[\Du]-\sqrt{\frac{\vt}{\Theta}}\Div\vU \right|^2\rang}\dt+\\
&+ \int_0^\tau\intO{\Theta\lang\Nu;\kappa(\vr,\vt)\left|\frac{\mathbf{D}_\vt}{\vt}-\frac{\Grad\Theta}{\Theta}\right|^2\rang}\dt+ \int_0^\tau \intO{ \lang	\Nu;\zeta (\vr,\vt)\left|\sqrt{\frac{\Theta}{\vt}}\vc{C}_{\vB}-\sqrt{\frac{\vt}{\Theta}}\Curl\vH \right|^2  \rang}\dt \\
&\geq c\int_0^\tau\intO{\lang\Nu;\left|\sqrt{\frac{\Theta}{\vt}}\mathbb{T}[\Du]-\sqrt{\frac{\vt}{\Theta}}\mathbb{T}[\mathbb{D}(\vU)]\right|^2\rang+\lang\Nu; \left|\sqrt{\frac{\Theta}{\vt}}\tr[\Du]-\sqrt{\frac{\vt}{\Theta}}\Div\vU \right|^2\rang}\dt+\\
&+ c\int_0^\tau\intO{\lang\Nu;\left|\frac{\mathbf{D}_\vt}{\vt}-\frac{\Grad\Theta}{\Theta}\right|^2\rang+\lang\Nu;\left|\sqrt{\frac{\Theta}{\vt}}\vc{C}_{\vB}-\sqrt{\frac{\vt}{\Theta}}\Curl\vH \right|^2 \rang}\dt,
\end{aligned}
\end{equation}
and 
\begin{equation}
\begin{aligned}\label{cr_aux3}
&- \frac 1 2\int_0^\tau\intO{\mathbb{T}[\mathbb{D}(\vU)]:\lang\Nu; \left(\mu(\vr,\vt)-\mu(r,\Theta)\right)\left(\frac{\vt}{\Theta}\mathbb{T}[\mathbb{D}(\vU)]-\mathbb{T}[\Du]\right)\rang}\dt+\\
&- \int_0^\tau\intO{\Div\vU\lang\Nu;\left(\eta(\vr,\vt)-\eta(r,\Theta) \right)\left(\frac{\vt}{\Theta}\Div\vU-\tr[\Du]\right)\rang}\dt +\\
&+\int_0^\tau\intO{\kappa(r,\Theta)\frac{\Grad\Theta}{\Theta}\cdot\lang\Nu;(\vt-\Theta)\left(\frac{\Grad\Theta}{\Theta}-\frac{\mathbf{D}_\vt}{\vt}\right)\rang}\dt-\\
&-\int_0^\tau\intO{\Grad\Theta\cdot\lang\Nu;\left(\kappa(\vr,\vt)-\kappa(r,\Theta)\right)\left(\frac{\Grad\Theta}{\Theta}-\frac{\mathbf{D}_\vt}{\vt}\right)\rang}\dt-\\
&-\int_0^\tau\intO{\Curl\vH\cdot\lang\Nu;(\zeta(\vr,\vt)-\zeta(r,\Theta))\left(\frac{\Theta}{\vt}\Curl\vH-\vc{C}_\vB\right)\rang} \dt \\
&\leq c \int_0^\tau\intO{\lang\Nu;|\vr-r|^2+|\vt-\Theta|^2\rang}\dt+\\
&+\varepsilon\int_0^\tau\intO{\lang\Nu;\left|\sqrt{\frac{\Theta}{\vt}}\mathbb{T}[\Du]-\sqrt{\frac{\vt}{\Theta}}\mathbb{T}[\mathbb{D}(\vU)]\right|^2\rang+\lang\Nu; \left|\sqrt{\frac{\Theta}{\vt}}\tr[\Du]-\sqrt{\frac{\vt}{\Theta}}\Div\vU \right|^2\rang}\dt+\\
&+ \varepsilon\int_0^\tau\intO{\lang\Nu;\left|\frac{\mathbf{D}_\vt}{\vt}-\frac{\Grad\Theta}{\Theta}\right|^2\rang+\lang\Nu;\left|\sqrt{\frac{\Theta}{\vt}}\vc{C}_{\vB}-\sqrt{\frac{\vt}{\Theta}}\Curl\vH \right|^2 \rang}\dt,
\end{aligned}
\end{equation}
for $\varepsilon>0$ small enough such that the latter integrals can be absorbed by the left-hand side. Under the assumption    \eqref{ass_bmvs} we have 
\begin{align}\label{cr_aux4}
\int_0^\tau\intO{\lang\Nu;\big[\vt+|p(\vr,\vt)|+|\vu-\vU|+\vr|s(\vr,\vt)||\vu|+|\vB||\vu-\vU|\big]_{\rm res}\rang}\dt=0.
\end{align}
Combining \eqref{rei_final} with \eqref{cr_aux1} - \eqref{cr_aux4} we get 
\begin{align*}
\mathcal{H}(\tau)&+c\int_0^\tau\intO{\lang\Nu;\left|\sqrt{\frac{\Theta}{\vt}}\mathbb{T}[\Du]-\sqrt{\frac{\vt}{\Theta}}\mathbb{T}[\mathbb{D}(\vU)]\right|^2\rang+\lang\Nu; \left|\sqrt{\frac{\Theta}{\vt}}\tr[\Du]-\sqrt{\frac{\vt}{\Theta}}\Div\vU \right|^2\rang}\dt+\\
&+ c\int_0^\tau\intO{\lang\Nu;\left|\frac{\mathbf{D}_\vt}{\vt}-\frac{\Grad\Theta}{\Theta}\right|^2\rang+\lang\Nu;\left|\sqrt{\frac{\Theta}{\vt}}\vc{C}_{\vB}-\sqrt{\frac{\vt}{\Theta}}\Curl\vH \right|^2 \rang}\dt \leq c\int_0^\tau\mathcal{H}(t)\dt,
\end{align*}
where  $c$ is a constant depending on the norms of strong solution and constants $\underline{\vr},$ $\overline{\vr},$ $\underline{\vt},$ $\overline{\vt} >0.$ 
The Gronwall lemma  yields the desired result. 

  \end{proof}

\subsubsection{Assuming constant coefficients} 
 
From now on we impose a structural condition on pressure, cf. \cite{BreFeiNo}. Note that it is not a   restrictive hypothesis, at least for gases for which $p\approx \vr e.$
We proceed further with another conditional result.
 
 \begin{Theorem}[{\bf Conditional result: constant coefficients}]\label{T:c2}
Let the coefficients $\mu,$ $\eta,$  $\kappa$ and $\zeta$ be positive constants.
Let the thermodynamic functions $p$, $e$ and $s$ be continuously differentiable functions of $(\vr,\vt)\in(0,\infty)^2$ satisfying Gibb's equation \eqref{gibbs} and the hypothesis of thermodynamic stability \eqref{hts}
with 
\begin{align}\label{ass_cc_p}
|p(\vr,\vt)|\aleq \big(1+\vr e(\vr,\vt)+\vr|s(\vr,\vt)|\big).
\end{align}
 Let $(r,\Theta,\vU,\vH)$ be a strong solution to the MHD system satisfying the regularity assumptions \eqref{smooth_cc} and emanating from the initial data  \eqref{ic}. Let $\mathcal{V}$ be a DMV solution of the same problem according to Definition~\ref{D:mvs}. Let us assume that 
\begin{align}\label{ass_cc}
\Nu\left\{\vt\leq \Ov{\vt}, |\vu|\leq\Ov{\vu}\right\}=1 \ \mbox{ for a.e. } (t,x)\in (0,T)\times\Omega 
\end{align}
for some constants  $\Ov{\vt},$ $\Ov{\vu},$  and the initial data coincide, i.e., 
\begin{align*}
\mathcal{V}_{0,x}=\delta_{[\vr_0,\vu_0,\vt_0,\vB_0]} \ \mbox{ for a.e. } x\in \Omega.
\end{align*} 

Then 
\begin{align*}
\Nu=\delta_{[r(t,x),\vU(t,x),\Theta(t,x),\vH(t,x),\mathbb{D}(\vu)(t,x),\Grad\Theta(t,x),\Curl\vH(t,x)]} \ \mbox{ for a.e. } (t,x)\in (0,T)\times\Omega.
\end{align*}
\end{Theorem}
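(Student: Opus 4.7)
The plan is to mimic the proof of Theorem~\ref{T:c1}: feed the additional assumptions of Theorem~\ref{T:c2} into the relative energy inequality \eqref{rei_final}, reduce it to a Gronwall-type inequality $\mathcal{H}(\tau)\aleq\int_0^\tau\mathcal{H}(t)\dt$, and conclude via Gronwall's lemma together with the lower bound \eqref{re_lb}. Since the initial data coincide, $\mathcal{H}(0)=0$, and therefore $\mathcal{H}\equiv 0$, whereupon \eqref{re_lb} together with the positivity of the dissipation quadratics on the left-hand side of \eqref{rei_final} forces $\Nu$ to collapse to the Dirac mass at the strong solution (including the derivative variables, via the vanishing of the positive quadratics and the compatibility conditions \eqref{cc_S}, \eqref{cc_t}, \eqref{cc_B}).

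The first simplification is that the constancy of $\mu,\eta,\kappa,\zeta$ kills every difference of the form $(\mu(\vr,\vt)-\mu(r,\Theta))$, $(\eta(\vr,\vt)-\eta(r,\Theta))$, $(\kappa(\vr,\vt)-\kappa(r,\Theta))$, $(\zeta(\vr,\vt)-\zeta(r,\Theta))$ on the left-hand side of \eqref{rei_final}---unlike in Theorem~\ref{T:c1}, no density bound is needed to handle these cross-terms, because there are none. The remaining genuine cross-term
\begin{align*}
\kappa\, \frac{\Grad\Theta}{\Theta}\cdot\lang\Nu;(\vt-\Theta)\left(\frac{\Grad\Theta}{\Theta}-\frac{\mathbf{D}_\vt}{\vt}\right)\rang
\end{align*}
is tamed by Young's inequality: its gradient factor is absorbed by the non-negative quadratic $\kappa\,\Theta\,\lang\Nu;|\mathbf{D}_\vt/\vt-\Grad\Theta/\Theta|^2\rang$ on the left, while $|\vt-\Theta|^2$ is split into its essential part (directly in \eqref{re_lb}) and its residual part, bounded by a constant times $1_{\rm res}$ because both $\vt\leq\Ov{\vt}$ and $\Theta\in L^\infty$.

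The crux of the argument is the estimate of the residual integral
\begin{align*}
\mathcal{R}(\tau)=\int_0^\tau\intO{\lang\Nu;\left[\vt+|p(\vr,\vt)|+|\vu-\vU|+\vr|s(\vr,\vt)||\vu|+|\vB||\vu-\vU|\right]_{\rm res}\rang}\dt
\end{align*}
on the right-hand side of \eqref{rei_final}, since no bound on $\vr$ is available. Using $\vt\leq\Ov{\vt}$ and $|\vu|\leq\Ov{\vu}$ one reduces $[\vt]_{\rm res}$ and $[|\vu-\vU|]_{\rm res}$ to multiples of $1_{\rm res}$; $[\vr|s(\vr,\vt)||\vu|]_{\rm res}$ to $\Ov{\vu}\cdot[\vr|s(\vr,\vt)|]_{\rm res}$; and $[|\vB||\vu-\vU|]_{\rm res}$ to a linear combination of $1_{\rm res}$ and $[|\vB|^2]_{\rm res}$ after a Young inequality. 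The delicate piece is $[|p(\vr,\vt)|]_{\rm res}$: without a density bound it cannot be majorised pointwise, and this is precisely where the structural pressure condition \eqref{ass_cc_p} enters, yielding $[|p(\vr,\vt)|]_{\rm res}\aleq 1_{\rm res}+[\vr e(\vr,\vt)]_{\rm res}+[\vr|s(\vr,\vt)|]_{\rm res}$. Every resulting quantity appears on the right-hand side of \eqref{re_lb}, hence $\mathcal{R}(\tau)\aleq\int_0^\tau \Enu(t)\dt\leq\int_0^\tau \mathcal{H}(t)\dt$.

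Assembling both steps and choosing Young-inequality parameters small enough to absorb all gradient remainders into the non-negative dissipation quadratics on the left of \eqref{rei_final} delivers $\mathcal{H}(\tau)\aleq\int_0^\tau\mathcal{H}(t)\dt$, and Gronwall's lemma concludes. The main obstacle throughout is the absence of any density bound in the hypotheses: it is for this reason that both the structural pressure condition \eqref{ass_cc_p} and the pointwise bounds on $\vt$ and $|\vu|$ are imposed, the former to tame $[|p|]_{\rm res}$ via internal energy and entropy, the latter to prevent the velocity-dependent residual integrals from picking up an uncontrolled factor of $\vu$.
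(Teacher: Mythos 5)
Your proposal follows the paper's proof essentially verbatim: constancy of the coefficients eliminates the coefficient-difference cross-terms, the remaining $\kappa$-term is absorbed by Young's inequality with the residual part of $|\vt-\Theta|^2$ controlled through the bound $\vt\le\Ov\vt$, and the residual integral is reduced via \eqref{ass_cc_p}, \eqref{ass_cc} to quantities dominated by \eqref{re_lb}, after which Gronwall concludes. The argument is correct and matches the paper's route.
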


\begin{proof}
We shall estimate  all integrals in \eqref{rei_final}  taking into account  the above  assumptions. The left-hand side directly equals
\begin{equation}
\begin{aligned}\label{cr_cc_aux2}
&\int_0^\tau\intO{\lang\Nu; \frac{\mu}{2}\left|\sqrt{\frac{\Theta}{\vt}}\mathbb{T}[\Du]-\sqrt{\frac{\vt}{\Theta}}\mathbb{T}[\mathbb{D}(\vU)]\right|^2 \rang +\lang\Nu;\eta\left|\sqrt{\frac{\Theta}{\vt}}\tr[\Du]-\sqrt{\frac{\vt}{\Theta}}\Div\vU \right|^2\rang}\dt+\\
&+ \int_0^\tau\intO{\Theta\lang\Nu;\kappa\left|\frac{\mathbf{D}_\vt}{\vt}-\frac{\Grad\Theta}{\Theta}\right|^2\rang}\dt+ \int_0^\tau \intO{ \lang	\Nu;\zeta \left|\sqrt{\frac{\Theta}{\vt}}\vc{C}_{\vB}-\sqrt{\frac{\vt}{\Theta}}\Curl\vH \right|^2  \rang}\dt +\\
&+\int_0^\tau\intO{\kappa\frac{\Grad\Theta}{\Theta}\cdot\lang\Nu;(\vt-\Theta)\left(\frac{\Grad\Theta}{\Theta}-\frac{\mathbf{D}_\vt}{\vt}\right)\rang}\dt.
\end{aligned}
\end{equation}
Moreover, 
\begin{align*}
&\int_0^\tau\intO{\kappa\frac{\Grad\Theta}{\Theta}\cdot\lang\Nu;(\vt-\Theta)\left(\frac{\Grad\Theta}{\Theta}-\frac{\mathbf{D}_\vt}{\vt}\right)\rang}\dt\\
& \leq \int_0^\tau\intO{\inf\Theta\frac{\kappa}{2}\lang\Nu;\left|\frac{\Grad\Theta}{\Theta}-\frac{\mathbf{D}_\vt}{\vt}\right|^2\rang}\dt +c_\kappa \int_0^\tau\intO{\lang\Nu;|\vt-\Theta|^2\rang}\dt .
\end{align*}
Note that 
\begin{equation}
\begin{aligned}\label{temp}
&\int_0^\tau\intO{\lang\Nu;|\vt-\Theta|^2\rang}\dt \leq  \int_0^\tau\intO{\lang\Nu;|[\vt-\Theta]_{\rm ess}|^2 +|[\vt-\Theta]_{\rm res}|^2  \rang}\dt\\
& \leq c\int_0^\tau \Enu(t)\dt + c\intO{\lang\Nu;[1+\vt^2]_{\rm res}  \rang}\dt \leq c\int_0^\tau \Enu(t)\dt + c \intO{\lang\Nu;[\vt^2]_{\rm res}  \rang}\dt,
\end{aligned}
\end{equation}
where the last integral can be controlled by \eqref{ass_cc}.
In view of \eqref{ass_cc_p}, \eqref{ass_cc} and \eqref{re_lb}  the ``residual'' integral on the right-hand side of \eqref{rei_final} can be bounded by the relative energy. Indeed,
\begin{align*}
&\int_0^\tau\intO{\lang\Nu;\vt+|p(\vr,\vt)|+|\vu-\vU|+\vr|s(\vr,\vt)||\vu|+|\vB||\vu-\vU|]_{\rm res}\rang}\dt\\
& \leq  c\int_0^\tau\intO{\lang\Nu;\left[1+\vr e(\vr,\vt)+\vr|s(\vr,\vt)|  + |\vB|^2\right]_{\rm res}\rang }\dt \leq c \int_0^\tau\mathcal{H}(t)\dt.
\end{align*}
Hence, the Gronwall lemma can be applied and the result follows. 
\end{proof}

\subsubsection{Assuming perfect gas law}

To show the third conditional  DMV-strong uniqueness result we consider the case of Boyle's law for pressure which comes along with a certain condition on the entropy and  coefficients $\mu,$ $\eta,$ $\kappa,$ and $\zeta$. 

 \begin{Theorem}[{\bf Conditional result: perfect gas law}]\label{T:c3}
Let the coefficients $\mu,$ $\eta,$  $\kappa$ and $\zeta$ be  such that 
\begin{equation}
\begin{aligned}
&\mu(\vt)=\mu_0+\mu_1\vt, \ \mu_0>0,\ \mu_1> 0, \ &\eta(\vt)=\eta_0+\eta_1\vt, \ \eta_0\geq 0,\ \eta_1\geq 0, \\ & \kappa >0,  \ &\zeta(\vt)=\zeta_0+\zeta_1\vt, \ \zeta_0 >0,\ \zeta_1 \geq 0.\label{boyle_coeff}
\end{aligned}
\end{equation}
Let the thermodynamic functions $p$, $e$ and $s$ be continuously differentiable functions of $(\vr,\vt)\in(0,\infty)^2$ satisfying Gibb's equation \eqref{gibbs} and the hypothesis of thermodynamic stability \eqref{hts}
with 
\begin{align}\label{ass_pg}
p(\vr,\vt)=\vr\vt, \ e(\vr,\vt)=c_v\vt,\ s(\vr,\vt)=\log\left(\frac{\vt^{c_v}}{\vr}\right), \ c_v>1.
\end{align}
 Let $(r,\Theta,\vU,\vH)$ be a strong solution to the MHD system satisfying the regularity assumptions \eqref{smooth_cc} and emanating from the initial data  \eqref{ic}. Let $\mathcal{V}$ be a DMV solution of the same problem according to Definition~\ref{D:mvs}. Let us assume that 
\begin{align}\label{ass_pg_s}
\Nu\left\{|s(\vr,\vt)|\leq \Ov{s}\right\}=1 \ \mbox{ for a.e. } (t,x)\in (0,T)\times\Omega 
\end{align}
for a constant  $\Ov{s},$ 
and the initial data coincide, i.e., 
\begin{align*}
\mathcal{V}_{0,x}=\delta_{[\vr_0,\vu_0,\vt_0,\vB_0]} \ \mbox{ for a.e. } x\in \Omega.
\end{align*} 

Then 
\begin{align*}
\Nu=\delta_{[r(t,x),\vU(t,x),\Theta(t,x),\vH(t,x),\mathbb{D}(\vu)(t,x),\Grad\Theta(t,x),\Curl\vH(t,x)]} \ \mbox{ for a.e. } (t,x)\in (0,T)\times\Omega.
\end{align*}
\end{Theorem}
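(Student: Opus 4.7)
The overall plan is identical to that of Theorems~\ref{T:c1} and \ref{T:c2}: I would apply Gronwall's lemma to the relative energy inequality \eqref{rei_final} once each term on its right-hand side has been bounded by $c\int_0^\tau\mathcal{H}(t)\,{\rm d}t$ and every sign-indefinite dissipative ``cross'' term on the left has been absorbed into the non-negative quadratic dissipative terms. Since the initial data coincide and $\mathcal{V}_0$ is a Dirac mass, one has $\mathcal{H}(0)=0$, so Gronwall forces $\mathcal{H}\equiv 0$, and the desired collapse to a Dirac mass then follows from the lower bound \eqref{re_lb}.

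The new ingredient relative to Theorems~\ref{T:c1}--\ref{T:c2} is that pointwise bounds on $\vr,\vt,\vu$ are replaced by the much weaker entropy bound \eqref{ass_pg_s}. Combined with the perfect-gas formula $s=\log(\vt^{c_v}/\vr)$, \eqref{ass_pg_s} yields the two-sided comparison $c_1\vt^{c_v}\leq\vr\leq c_2\vt^{c_v}$, which rigidly couples $\vr$ and $\vt$. As a consequence, the residual set $\{\psi_\delta<1\}$ decomposes into a ``small'' part where both $\vr$ and $\vt$ are bounded by constants depending on $\delta$, and a ``large'' part where $\vr e(\vr,\vt)=c_v\vr\vt$ dominates every other expression in $(\vr,\vt)$. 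Hence the purely thermodynamic pieces $[\vt]_{\rm res}$, $[|p(\vr,\vt)|]_{\rm res}=[\vr\vt]_{\rm res}$ and $[\vr|s(\vr,\vt)|]_{\rm res}$ of the residual integral in \eqref{rei_final} are controlled by $[1+\vr e]_{\rm res}\aleq\mathcal{H}$ via \eqref{re_lb}. The velocity-dependent residual pieces $[|\vu-\vU|]_{\rm res}$, $[\vr|s||\vu|]_{\rm res}$ and $[|\vB||\vu-\vU|]_{\rm res}$ are then handled by Young's inequality, reducing them to quadratic residual quantities such as $[\vr|\vu|^2]_{\rm res}$, $[|\vB|^2]_{\rm res}$ and $[\vr]_{\rm res}$ (using $1\leq\delta\vr$ on the large-density portion and the smoothness of $\vU,\vH$ together with the forced smallness of $\vr\sim\vt^{c_v}$ on the small-density one).

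For the sign-indefinite dissipative terms on the left-hand side of \eqref{rei_final} involving differences $\mu(\vr,\vt)-\mu(r,\Theta)$, $\eta(\vr,\vt)-\eta(r,\Theta)$, $\kappa(\vr,\vt)-\kappa(r,\Theta)$ and $\zeta(\vr,\vt)-\zeta(r,\Theta)$, the structural ansatz \eqref{boyle_coeff} is the crucial input: it immediately gives $\kappa(\vr,\vt)-\kappa(r,\Theta)=0$ and the affine identities $\mu(\vt)-\mu(\Theta)=\mu_1(\vt-\Theta)$, $\eta(\vt)-\eta(\Theta)=\eta_1(\vt-\Theta)$, $\zeta(\vt)-\zeta(\Theta)=\zeta_1(\vt-\Theta)$. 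Each such cross term is therefore a product of $(\vt-\Theta)$ with a factor of the form $\tfrac{\vt}{\Theta}\mathbb{T}[\mathbb{D}(\vU)]-\mathbb{T}[\Du]$ (and its analogues for $\eta$ and $\zeta$). A Young inequality with a small parameter $\varepsilon$ then absorbs an $\varepsilon$-fraction of the corresponding squared dissipation term on the left and leaves a remainder proportional to $|\vt-\Theta|^2$, which is split into essential and residual parts and bounded by $\mathcal{H}$ as in the previous step.

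I expect the main obstacle to be the control of $[|\vu-\vU|]_{\rm res}$ and $[|\vB||\vu-\vU|]_{\rm res}$ in the absence of any $L^\infty$-bound on $|\vu|$: it is precisely the restriction $c_v>1$ in \eqref{ass_pg} that makes the comparison $\vr\sim\vt^{c_v}$ stiff enough to force the low-density region of $\{\psi_\delta<1\}$ to contribute only lower-order terms, and this rigid coupling has to be tracked carefully through each Young splitting so that no uncontrolled power of $\vt$ is left behind. Once all these estimates are assembled, \eqref{rei_final} reduces to $\mathcal{H}(\tau)\leq c\int_0^\tau\mathcal{H}(t)\,{\rm d}t$, and Gronwall's lemma closes the argument.
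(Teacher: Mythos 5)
Your overall strategy matches the paper's: lower-bound the left-hand side of \eqref{rei_final} using \eqref{boyle_coeff}, absorb the affine-in-$\vt$ cross terms by Young's inequality into the quadratic dissipation plus $c_\varepsilon|\vt-\Theta|^2$, control $[\vt^2]_{\rm res}$ and the purely thermodynamic residual terms through the two-sided coupling $\vr\sim\vt^{c_v}$ coming from \eqref{ass_pg_s}, and close with Gronwall. All of that part of your argument is sound and is essentially what the paper does.

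There is, however, a genuine gap in your treatment of $[|\vu-\vU|]_{\rm res}$ and $[|\vB||\vu-\vU|]_{\rm res}$. You propose to reduce these by Young's inequality to quantities such as $[\vr|\vu|^2]_{\rm res}$ and $[\vr]_{\rm res}$, invoking the coupling $\vr\sim\vt^{c_v}$ on the low-density part of the residual set. This cannot work: the comparison $\vr\sim\vt^{c_v}$ constrains only $\vr$ and $\vt$ and says nothing about $\vu$, while the relative energy controls the velocity only through the weighted quantity $\vr|\vu|^2$, which degenerates as $\vr\to 0$. On the low-density residual set the quantity $|\vu-\vU|$ is therefore not controlled by any combination of $[\vr|\vu|^2]_{\rm res}$, $[\vr]_{\rm res}$, $[1]_{\rm res}$, and no choice of Young splitting avoids an uncontrolled factor $1/\vr$. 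The paper's resolution is different and uses two ingredients you do not mention: the Korn--Poincar\'e inequality \eqref{KP}, which is built into Definition~\ref{D:mvs} and bounds the \emph{unweighted} quantity $\int_0^\tau\intO{\lang\Nu;|\vu-\vU|^2\rang}\dt$ by $\int_0^\tau\intO{\lang\Nu;|\mathbb{T}[\Du]-\mathbb{T}[\mathbb{D}(\vU)]|^2\rang}\dt$; and the strict positivity $\mu_1>0$ in \eqref{boyle_coeff}, which produces the extra dissipation term $\inf\Theta\,\frac{\mu_1}{2}\int_0^\tau\intO{\lang\Nu;|\mathbb{T}[\Du]-\mathbb{T}[\mathbb{D}(\vU)]|^2\rang}\dt$ on the left-hand side, capable of absorbing the $\varepsilon$-fraction generated by Korn--Poincar\'e. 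One then writes $[|\vu-\vU|]_{\rm res}\leq[\frac{1}{2\varepsilon}]_{\rm res}+\varepsilon|\vu-\vU|^2$ (and similarly with the weight $|\vB|$, using $[|\vB|^2]_{\rm res}\aleq E$), bounds $[1]_{\rm res}$ by the relative energy via \eqref{re_lb}, and absorbs the quadratic piece. Without \eqref{KP} and the $\mu_1>0$ dissipation your argument does not close.
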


 \begin{proof}
 In accordance with \eqref{boyle_coeff} and \eqref{re_lb} the left-hand side of \eqref{rei_final} can be bounded from below by 
\begin{equation}
\begin{aligned}\label{cr_aux2}
&  \int_0^\tau\intO{\lang\Nu;\frac{\mu_0}{2}\left|\sqrt{\frac{\Theta}{\vt}}\mathbb{T}[\Du]-\sqrt{\frac{\vt}{\Theta}}\mathbb{T}[\mathbb{D}(\vU)]\right|^2+\eta_0\left|\sqrt{\frac{\Theta}{\vt}}\tr[\Du]-\sqrt{\frac{\vt}{\Theta}}\Div\vU \right|^2\rang}\dt+\\
&+ \inf\Theta\int_0^\tau\intO{\kappa\lang\Nu;\left|\frac{\mathbf{D}_\vt}{\vt}-\frac{\Grad\Theta}{\Theta}\right|^2\rang+\zeta_0\lang\Nu;\left|\sqrt{\frac{\Theta}{\vt}}\vc{C}_{\vB}-\sqrt{\frac{\vt}{\Theta}}\Curl\vH \right|^2 \rang}\dt+\\
&+\kappa\int_0^\tau\intO{\frac{\Grad\Theta}{\Theta}\cdot\lang\Nu;(\vt-\Theta)\left(\frac{\Grad\Theta}{\Theta}-\frac{\mathbf{D}_\vt}{\vt}\right)\rang}\dt+\\
&+\inf\Theta\int_0^\tau\intO{\frac{\mu_1}{2}\lang\Nu;\left|\mathbb{T}[\Du]-\mathbb{T}[\Grad\vU]\right|^2\rang +\eta_1\lang\Nu;\left|\tr[\Du]-\Div\vU \right|^2\rang }\dt+\\
&+\inf\Theta\int_0^\tau\intO{\zeta_1\lang\Nu;\left|\vc{C}_\vB-\Curl\vH\right|^2\rang}\dt-\\
&- \frac{\mu_1}{2}\int_0^\tau\intO{\mathbb{T}[\mathbb{D}(\vU)]:\lang\Nu; \left(\vt-\Theta\right)\left(\mathbb{T}[\Du]-\mathbb{T}[\mathbb{D}(\vU)]\right)\rang}\dt-\\
&- \eta_1\int_0^\tau\intO{\Div\vU\lang\Nu;\left(\vt-\Theta \right)\left(\tr[\Du]-\Div\vU\right)\rang}\dt -\\
&- \zeta_1\int_0^\tau\intO{\Curl\vH\cdot\lang\Nu; \left(\vt-\Theta\right)\left(\vc{C}_\vB-\Curl\vH\right)\rang}\dt.
\end{aligned}
\end{equation}
Moreover, 
\begin{equation}
\begin{aligned}\label{cr_aux3}
\kappa&\int_0^\tau\intO{\frac{\Grad\Theta}{\Theta}\cdot\lang\Nu;(\vt-\Theta)\left(\frac{\Grad\Theta}{\Theta}-\frac{\mathbf{D}_\vt}{\vt}\right)\rang}\dt \\
&\leq \varepsilon\int_0^\tau\intO{\lang\Nu;\left|\frac{\Grad\Theta}{\Theta}-\frac{\mathbf{D}_\vt}{\vt}\right|^2\rang}\dt+c_\varepsilon\int_0^\tau\intO{\lang\Nu;|\vt-\Theta|^2\rang}\dt, \\
\frac{\mu_1}{2}&\int_0^\tau\intO{\mathbb{T}[\mathbb{D}(\vU)]:\lang\Nu; \left(\vt-\Theta\right)\left(\mathbb{T}[\Du]-\mathbb{T}[\mathbb{D}(\vU)]\right)\rang}\dt \\
&\leq \varepsilon\int_0^\tau\intO{\lang\Nu;\left|\mathbb{T}[\Du]-\mathbb{T}[\mathbb{D}(\vU)]\right|^2\rang}\dt + c_\varepsilon \int_0^\tau\intO{\lang\Nu;|\vt-\Theta|^2\rang}\dt, \\
\eta_1&\int_0^\tau\intO{\Div\vU\lang\Nu;\left(\vt-\Theta \right)\left(\tr[\Du]-\Div\vU\right)\rang}\dt \\
&\leq  \varepsilon\int_0^\tau\intO{\lang\Nu; \left|\tr[\Du]-\Div\vU \right|^2\rang}\dt+ c_\varepsilon \int_0^\tau\intO{\lang\Nu;|\vt-\Theta|^2\rang}\dt, \\
\zeta_1&\int_0^\tau\intO{\Curl\vH\cdot\lang\Nu; \left(\vt-\Theta\right)\left(\vc{C}_\vB-\Curl\vH\right)\rang}\dt\\
&\leq \varepsilon\int_0^\tau\intO{\lang\Nu; \left|\vc{C}_\vB-\Curl\vH\right|^2\rang}\dt+ c_\varepsilon \int_0^\tau\intO{\lang\Nu;|\vt-\Theta|^2\rang}\dt.
\end{aligned}
\end{equation}
Recall \eqref{temp}. 
To control the ``residual'' integrals we begin with the observation that follows from \eqref{ass_pg}:
\begin{align*}
\vt^{c_v} \aleq \vr \Rightarrow \vt^{1+c_v} \aleq \vr\vt =\frac{1}{c_v}\vr e(\vr,\vt) .
\end{align*}
Thus, 
\begin{align*}
\intO{\lang\Nu;[\vt+\vt^2]_{\rm res}  \rang}\dt \leq \intO{\lang\Nu;[1+\vr e(\vr,\vt)]_{\rm res}  \rang}\dt .
\end{align*}
Further, by \eqref{ass_pg} and \eqref{ass_pg_s}, we have
\begin{align*}
\int_0^\tau\intO{\lang\Nu;\left[|p(\vr,\vt)|+\vr|s(\vr,\vt)| |\vu| \right]_{\rm res}\rang }\dt  \leq \int_0^\tau\intO{\lang\Nu;\left[\vr e(\vr,\vt)+\vr+\vr |\vu|^2 \right]_{\rm res}\rang }\dt.
\end{align*}
The Korn-Poincar\' e inequality \eqref{KP} yields 
\begin{align*}
&\int_0^\tau\intO{\lang\Nu;\left[|\vu-\vU|\right]_{\rm res}\rang }\dt \leq \int_0^\tau\intO{\lang\Nu;\left[\frac{1}{2\varepsilon}\right]_{\rm res}+\varepsilon |\vu-\vU|^2\rang }\dt \\
&\leq c\int_0^\tau\Enu(t)\dt+\varepsilon\int_0^\tau\intO{\lang\Nu;\left|\mathbb{T}[\Du]-\mathbb{T}[\mathbb{D}(\vU)]\right|^2\rang }\dt,
\end{align*}
 and analogously,
\begin{align*}
&\int_0^\tau\intO{\lang\Nu;\left[|\vB||\vu-\vU|\right]_{\rm res}\rang }\dt \leq \int_0^\tau\intO{\lang\Nu;\left[\frac{1}{2\varepsilon}|\vB|^2\right]_{\rm res}+\varepsilon |\vu-\vU|^2\rang }\dt \\
&\leq c_\varepsilon\int_0^\tau\Enu(t)\dt+ \varepsilon\int_0^\tau\intO{\lang\Nu;\left|\mathbb{T}[\Du]-\mathbb{T}[\mathbb{D}(\vU)]\right|^2\rang }\dt .
\end{align*}
Combining the above estimates we conclude
\begin{align*}
&\int_0^\tau\intO{\lang\Nu;\left[\vt^2+\vt+|p(\vr,\vt)|+|\vu-\vU|+\vr|s(\vr,\vt)| |\vu-\vU| +|\vB||\vu-\vU|\right]_{\rm res}\rang }\dt \\
&\leq c_\varepsilon\int_0^\tau\Enu(t)\dt+2 \varepsilon\int_0^\tau\intO{\lang\Nu;\left|\mathbb{T}[\Du]-\mathbb{T}[\mathbb{D}(\vU)]\right|^2\rang }\dt 
\end{align*}
for $\varepsilon>0$ small enough such that the last integral can be absorb by the same one on the left-hand side. 
The relative energy inequality finally reads
\begin{equation}
\begin{aligned}\label{cr_final_re}
 &\mathcal{H}(\tau)+\int_0^\tau\intO{\lang\Nu;\left|\sqrt{\frac{\Theta}{\vt}}\mathbb{T}[\Du]-\sqrt{\frac{\vt}{\Theta}}\mathbb{T}[\mathbb{D}(\vU)]\right|^2+\left|\sqrt{\frac{\Theta}{\vt}}\tr[\Du]-\sqrt{\frac{\vt}{\Theta}}\Div\vU \right|^2\rang}\dt+\\
&+ \int_0^\tau\intO{\lang\Nu;\left|\frac{\mathbf{D}_\vt}{\vt}-\frac{\Grad\Theta}{\Theta}\right|^2\rang+\lang\Nu;\left|\sqrt{\frac{\Theta}{\vt}}\vc{C}_{\vB}-\sqrt{\frac{\vt}{\Theta}}\Curl\vH \right|^2 \rang}\dt+\\
&+\int_0^\tau\intO{\lang\Nu;\left|\mathbb{T}[\Du]-\mathbb{T}[\mathbb{D}(\vU)]\right|^2\rang}\dt+\int_0^\tau\intO{\lang\Nu;\left|\tr[\Du]-\Div\vU \right|^2\rang}\dt+\\
&+\int_0^\tau\intO{\lang\Nu;\left|\vc{C}_\vB-\Curl\vH\right|^2\rang}\dt \aleq \int_0^\tau \mathcal{H}(t)\dt.
\end{aligned}
\end{equation}
Obviously, the Gronwall lemma yields the desired result. 
 \end{proof}
 
 \subsection{Unconditional DMV-strong uniqueness}

Finally, we present an {\it unconditional} (dissipative measure-valued)-strong uniqueness principle in the sense that no additional assumptions on DMV solution are imposed. 
The prize we need to pay are the equations of state based on the Third law of thermodynamic enforced through Gibb's equation as considered in, e.g. \cite{BreFeiNo,FeiNoSL}.

\subsubsection{Equation of state}

We suppose the pressure obeys the \emph{thermal equation of state} 
\begin{equation} \label{eosT}
	p(\vr, \vt)  = p_M(\vr, \vt) + p_R(\vt),
\end{equation}
while the internal energy satisfies the \emph{caloric equation of state}
\begin{equation} \label{eosC}
	e(\vr, \vt) = e_M (\vr, \vt) + e_R (\vr ,\vt).
\end{equation}	 
The  components $p_M$ and $e_M$ satisfy the relation characteristic for monoatomic gases,
\begin{equation} \label{eosM}
	p_M (\vr, \vt) = \frac{2}{3} \vr e_M(\vr, \vt). 
\end{equation} 
Now, it follows from Gibbs' relation \eqref{gibbs} applied to $p_M$, $e_M$ that they must take a general form
\begin{align}
	p_M(\vr,\vt) = \vt^{\frac{5}{2}} P \left( \frac{\vr}{\vt^{\frac{3}{2}}  } \right),\ e_M(\vr,\vt) =  \frac{3}{2} \frac{\vt^{\frac{5}{2}} }{\vr} P \left( \frac{\vr}{\vt^{\frac{3}{2}}  } \right)
	\label{w1}	
\end{align}
for a function $P \in C^1([0,\infty))$. In addition, the hypothesis of thermodynamics stability \eqref{hts}
applied to $p_M$, $e_M$ yields
\begin{equation} \label{w2}
	P(0) = 0,\ P'(Z) > 0,\ \frac{ \frac{5}{3} P(Z) - P'(Z) Z }{Z} > 0 \ \mbox{for all}\ Z \geq 0.
\end{equation} 	
In accordance with \eqref{w1}, the entropy takes the form
\begin{equation} \label{entropy}
	s(\vr, \vt) = s_M(\vr, \vt) + s_R (\vr, \vt),\ s_M(\vr, \vt) = \mathcal{S} \left( \frac{\vr}{\vt^{\frac{3}{2}} } \right),
\end{equation}
where
\begin{equation} \label{w6}
	\mathcal{S}'(Z) = -\frac{3}{2} \frac{ \frac{5}{3} P(Z) - P'(Z) Z }{Z^2}.
\end{equation}

Finally, we impose two technical but physically grounded hypotheses
in the degenerate area $\frac{\vr}{\vt^{\frac{3}{2}}} = Z >> 1.$
Firstly, it follows from 
\eqref{w2} that the function $Z \mapsto P(Z)/ Z^{\frac{5}{3}}$ is decreasing, and we suppose
\begin{equation} \label{w3}
	\lim_{Z \to \infty} \frac{ P(Z) }{Z^{\frac{5}{3}}} = p_\infty > 0.
\end{equation}
Secondly, by the same token, the function $Z \mapsto \mathcal{S}(Z)$ is decreasing, and we suppose 
\begin{equation} \label{w7}
	\lim_{Z \to \infty} \mathcal{S}(Z) = 0.	
\end{equation}	
Hypothesis \eqref{w3} reflects the effect of the electron pressure in the degenerate area, while 
\eqref{w7} is nothing other than the Third law of thermodynamics.

\begin{Theorem}[{\bf Unconditional result}]\label{T:ur}
Let the coefficients $\mu,$ $\eta,$  $\kappa$ and $\zeta$ be  such that 
\begin{equation}
\begin{aligned}
&\mu(\vt)=\mu_0+\mu_1\vt, \ \mu_0>0,\ \mu_1> 0, \ &\eta(\vt)=\eta_0+\eta_1\vt, \ \eta_0\geq 0,\ \eta_1\geq 0, \\ &\kappa >0,  \ &\zeta(\vt)=\zeta_0+\zeta_1\vt, \ \zeta_0 >0,\ \zeta_1 \geq 0.\label{ur_coeff}
\end{aligned}
\end{equation}
Let the thermodynamic functions $p$, $e$ and $s$ be given  by \eqref{eosT} - \eqref{w7} with 
\begin{align}
  p_R(\vt) = a \vt^2,\ e_R (\vr ,\vt) = \frac{a}{\vr} \vt^2, \ s_R(\vr, \vt) =  \frac{2a}{\vr}\vt, \ a > 0.
\end{align}
  Let $(r,\Theta,\vU,\vH)$ be a strong solution to the MHD system satisfying the regularity assumptions \eqref{smooth_cc} and emanating from the initial data  \eqref{ic}. Let $\mathcal{V}$ be a DMV solution of the same problem according to Definition~\ref{D:mvs} such that the initial data coincide, 
\begin{align}\label{pR}
\mathcal{V}_{0,x}=\delta_{[\vr_0,\vu_0,\vt_0,\vB_0]} \ \mbox{ for a.e. } x\in \Omega.
\end{align} 

Then 
\begin{align*}
\Nu=\delta_{[r(t,x),\vU(t,x),\Theta(t,x),\vH(t,x),\mathbb{D}(\vu)(t,x),\Grad\Theta(t,x),\Curl\vH(t,x)]} \ \mbox{ for a.e. } (t,x)\in (0,T)\times\Omega.
\end{align*}
\end{Theorem}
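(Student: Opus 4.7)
The plan is to apply the relative energy inequality \eqref{rei_final} from Lemma \ref{L:rei} with the given strong solution $(r,\Theta,\vU,\vH)$, and to close the argument via the Gronwall lemma exactly as in the conditional Theorems \ref{T:c1}--\ref{T:c3}. Since the initial data coincide and $E$ vanishes on the diagonal, $\mathcal{H}(0)=0$, so it suffices to show that every term on the right-hand side of \eqref{rei_final} is either absorbed into the non-negative dissipation on the left or bounded by $c\int_0^\tau\mathcal{H}(t)\dt$. The new difficulty relative to the conditional results is that no a priori $L^\infty$ bound on $\vt$, $|\vu|$ or $|s|$ is available; all such controls must be extracted from the structure of the equation of state \eqref{eosT}--\eqref{w7} together with the specific radiative contributions $p_R=a\vt^2$, $\vr e_R=a\vt^2$, $\vr s_R=2a\vt$.

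First I would establish the crucial pointwise EoS bounds. From $p_M=\tfrac{2}{3}\vr e_M$ together with $p_R=\vr e_R$ one obtains $|p(\vr,\vt)|\leq c\,\vr e(\vr,\vt)$. The identity $\vr e_R=a\vt^2$ yields the central radiation estimate
\[
[\vt^2]_{\rm res}\leq \tfrac{1}{a}\,[\vr e(\vr,\vt)]_{\rm res},\qquad
[\vt]_{\rm res}\leq c\bigl(1+[\vr e(\vr,\vt)]_{\rm res}\bigr),
\]
while the standard analysis of \cite{FeiNoSL} combined with \eqref{w7} gives $\vr|s(\vr,\vt)|\leq c(1+\vr+\vt+\vr e(\vr,\vt))$. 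Consequently the ``easy'' residual contributions $[\vt+|p(\vr,\vt)|]_{\rm res}$ are controlled by $c\,[1+\vr e]_{\rm res}$, which by the lower bound \eqref{re_lb} of the relative energy reduces to $c\,\Enu(t)$.

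Next I would repeat the structural treatment of the left-hand side of \eqref{rei_final} from the proof of Theorem \ref{T:c3}: the splittings $\mu(\vt)=\mu_0+\mu_1\vt$, $\eta(\vt)=\eta_0+\eta_1\vt$, $\zeta(\vt)=\zeta_0+\zeta_1\vt$ generate both a constant-coefficient family of dissipation squares and a $\vt$-weighted dissipation $\inf\Theta\cdot\mu_1\lang\Nu;|\mathbb{T}[\Du]-\mathbb{T}[\mathbb{D}(\vU)]|^2\rang$ (and analogues with $\eta_1$ and $\zeta_1$). Cross terms of the form $\mathbb{T}[\mathbb{D}(\vU)]:\lang\Nu;(\vt-\Theta)(\mathbb{T}[\Du]-\mathbb{T}[\mathbb{D}(\vU)])\rang$ are absorbed by Young's inequality, producing $\int_0^\tau\intO{\lang\Nu;|\vt-\Theta|^2\rang}\dt$, which via the essential/residual split \eqref{temp} and the radiation bound above is controlled by $c\int_0^\tau\Enu(t)\dt$. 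The remaining residual contributions $|\vu-\vU|$ and $|\vB||\vu-\vU|$ on the right of \eqref{rei_final} are handled by Young's inequality followed by the Korn--Poincar\'e inequality \eqref{KP}, using the constant-coefficient piece of the dissipation on the left.

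The hard part, and the principal obstacle, is the residual piece of $\vr|s(\vr,\vt)||\vu|$, because on the residual set $\vt$ may be arbitrarily large and $|\vu|$ is not a priori bounded. Splitting $\vu=(\vu-\vU)+\vU$ moves the difficulty to the term $\vt|\vu-\vU|$, which I would control by the Young inequality $\vt|\vu-\vU|\leq \varepsilon\,\vt|\vu-\vU|^2+C_\varepsilon\,\vt$: the constant part $C_\varepsilon[\vt]_{\rm res}$ is absorbed into $c[1+\vr e]_{\rm res}\lesssim\Enu(t)$ by the radiation bound, while $\varepsilon\,\vt|\vu-\vU|^2$ is absorbed into the $\vt$-weighted Korn--Poincar\'e dissipation furnished by $\mu_1\vt$, applied as a weighted variant of \eqref{KP} to $\mathbb{T}[\mathbb{D}(\vu)]-\mathbb{T}[\mathbb{D}(\vU)]$. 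This is precisely where the assumptions $\mu_1>0$ and $a>0$ couple: the radiation pressure provides a $\vt^2$-residual energy bound, the $\vt$-dependent viscosity provides a $\vt$-weighted gradient bound, and together they close the argument. Once all residual right-hand-side terms are either absorbed by a small fraction of the dissipation on the left or dominated by $c\,\mathcal{H}(t)$, \eqref{rei_final} reduces to $\mathcal{H}(\tau)\leq c\int_0^\tau\mathcal{H}(t)\dt$; Gronwall's lemma combined with $\mathcal{H}(0)=0$ forces $\Enu\equiv 0$ and $\mathcal{D}_\Theta\equiv\mathcal{D}_\vH\equiv 0$, which by \eqref{re_lb} yields the Dirac-mass conclusion $\Nu=\delta_{[r,\vU,\Theta,\vH,\mathbb{D}(\vu),\Grad\Theta,\Curl\vH]}$.
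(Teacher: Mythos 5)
Your overall strategy coincides with the paper's: reuse the left-hand-side decomposition and cross-term absorption from the proof of Theorem~\ref{T:c3} (which remain valid under \eqref{ur_coeff}), control $[\vt^2+\vt+|p(\vr,\vt)|]_{\rm res}$ through $a\vt^2=\vr e_R(\vr,\vt)$ and $p\aleq \vr e$, reduce $\vr|s(\vr,\vt)||\vu|$ to $\vt|\vu-\vU|$ plus terms dominated by the energy (with $\vr|s_M(\vr,\vt)|^2\aleq 1+\vr+\vr e_M(\vr,\vt)$ from \eqref{w1}--\eqref{w7}), and close by Gronwall. Most of these steps are carried out correctly.

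There is, however, a genuine gap at exactly the point you identify as the hardest: your treatment of $[\vt|\vu-\vU|]_{\rm res}$. You split $\vt|\vu-\vU|\leq\varepsilon\,\vt|\vu-\vU|^2+C_\varepsilon\vt$ and propose to absorb $\varepsilon\int_0^\tau\intO{\lang\Nu;\vt|\vu-\vU|^2\rang}\dt$ into the dissipation generated by $\mu_1\vt$ via ``a weighted variant of \eqref{KP}.'' No such weighted Korn--Poincar\'e inequality is available: \eqref{KP} is stated (and proved in the cited reference) only in unweighted form, and since $\vt$ is merely a measurable component of the Young measure with no spatial regularity, an inequality of the type $\intO{\lang\Nu;\vt|\vu-\vU|^2\rang}\aleq\intO{\lang\Nu;\vt|\mathbb{T}[\Du]-\mathbb{T}[\mathbb{D}(\vU)]|^2\rang}$ cannot hold in general. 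Moreover, the dissipation actually extracted from the $\mu_1\vt$ term in \eqref{cr_aux2} is $\inf\Theta\cdot\frac{\mu_1}{2}\lang\Nu;|\mathbb{T}[\Du]-\mathbb{T}[\mathbb{D}(\vU)]|^2\rang$, i.e.\ it carries the weight $\Theta$ (bounded) rather than $\vt$ (unbounded on the residual set), so it cannot dominate a $\vt$-weighted quantity. The correct splitting reverses the roles in Young's inequality: $\vt|\vu-\vU|\leq\varepsilon|\vu-\vU|^2+c_\varepsilon\vt^2$. Then $c_\varepsilon[\vt^2]_{\rm res}=\frac{c_\varepsilon}{a}[\vr e_R(\vr,\vt)]_{\rm res}\aleq\Enu$ by \eqref{t_eR} and \eqref{re_lb}, while $\varepsilon|\vu-\vU|^2$ is absorbed through the \emph{unweighted} inequality \eqref{KP} into the unweighted dissipation $|\mathbb{T}[\Du]-\mathbb{T}[\mathbb{D}(\vU)]|^2$ already present on the left. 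With this single correction the argument closes exactly as you describe.
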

\begin{proof}
Note that \eqref{cr_aux2}, \eqref{cr_aux3} with \eqref{temp} remain valid under the assumptions of Theorem~\ref{T:ur}. Hence it is enough to control the ``residual'' integral, 
\begin{align*}
&\int_0^\tau\intO{\lang\Nu;\left[\vt^2+\vt+|p(\vr,\vt)|+|\vu-\vU|+\vr|s(\vr,\vt)| |\vu| +|\vB||\vu-\vU|\right]_{\rm res}\rang }\dt .
\end{align*} 
Firstly, exactly as before, we get
\begin{align*}
&\int_0^\tau\intO{\lang\Nu;\left[|\vu-\vU|+|\vB||\vu-\vU|\right]_{\rm res}\rang }\dt\\
& \leq  c_\varepsilon\int_0^\tau\Enu(t)\dt+2 \varepsilon\int_0^\tau\intO{\lang\Nu;\left|\mathbb{T}[\Du]-\mathbb{T}[\mathbb{D}(\vU)]\right|^2\rang }\dt .
\end{align*}
Since $p(\vr,\vt) \aleq \vr e(\vr,\vt),$ cf. \cite[Chapter 3]{FeiNoSL}, and   
\begin{align}\label{t_eR}
a\vt^2 =\vr e_R(\vr,\vt),
\end{align}
 we get
\begin{align*}
&\int_0^\tau\intO{\lang\Nu;\left[\vt^2+\vt+|p(\vr,\vt)|\right]_{\rm res}\rang }\dt \leq c\int_0^\tau\intO{\lang\Nu;\left[1+\vr e(\vr,\vt)\right]_{\rm res}\rang }\dt.
\end{align*}
To handle the last part of the ``residual'' integral we note that
$$\vr|s(\vr,\vt)| |\vu| \leq \vr|s_R(\vr,\vt)| |\vu| + \vr|s_M(\vr,\vt)| |\vu| 
\leq c \left(\vt|\vu-\vU|+\vt+\vr|\vu|^2+\vr|s_M(\vr,\vt)|^2\right).$$
Hence the only term we need to control is 
\begin{align*}
&\int_0^\tau\intO{\lang\Nu;\left[\vt|\vu-\vU|+\vr|s_M(\vr,\vt)|^2\right]_{\rm res}\rang }.
\end{align*}
The first term can be handled by the H\" older and the Korn-Poincar\' e inequalities together with \eqref{t_eR}, while for the second term we have 
\begin{align*}
&\int_0^\tau\intO{\lang\Nu;\left[\vr|s_M(\vr,\vt)|^2 \right]_{\rm res}\rang }\dt \leq c\int_0^\tau\intO{\lang\Nu;\left[1+\vr+\vr e_M(\vr,\vt) \right]_{\rm res}\rang }\dt
\end{align*}
due to the structural assumptions \eqref{w1} - \eqref{w7}. See \cite[Section 3.5.1]{BreFei} for the proof.
Finally,  the ``residual'' integral is controlled by 
\begin{align*}
c_\varepsilon\int_0^\tau\mathcal{H}(t)\dt+ \varepsilon\int_0^\tau\intO{\lang\Nu;\left|\mathbb{T}[\Du]-\mathbb{T}[\mathbb{D}(\vU)]\right|^2\rang }\dt ,
\end{align*} 
and the Gronwall lemma yields the result.
\end{proof}
\begin{Remark}
Augmenting the pressure by a radiation component $p_R$ is not only technically convenient but also relevant to problems in astrophysics, cf.  \cite{BATT}. The choice \eqref{pR} is relevant for models of neuron stars studied in \cite{Latt}. 
\end{Remark}

\section*{Conclusions}

In the present paper we have studied the full three-dimensional magnetohydrodynamic model governing the behaviour of a general viscous, compressible, electrically and heat conducting fluid driven by non-conservative boundary conditions. Firstly, we have given a precise definition of a dissipative measure-valued solution (Definition~\ref{D:mvs}). In order to prove  stability of strong solutions in the class of DMV solutions, we have derived a suitable form of the relative energy inequality (Lemma~\ref{L:rei}). This has allowed us to show a few versions of the DMV-strong uniqueness principle based on assumptions we made. The conditional results include the case of bounded DMV solution (Theorem~\ref{T:c1}), the case of constant viscosity, heat conductivity, and magnetic diffusivity coefficients (Theorem~\ref{T:c2}), and finally the case of Boyle's pressure law (Theorem~\ref{T:c3}). 
Finally, the unconditional result with no additional assumptions on DMV solution has been proven (Theorem~\ref{T:ur}).

The results we presented are crucial for a rigorous numerical analysis of the full compressible MHD system in view of the approach developed for compressible fluid flow models in \cite{FLMS}. Indeed, having DMV-strong uniqueness principle at hand,  it is enough to provide a sequence of stable and consistent approximations generating a DMV solution as defined in the present paper in order to   prove convergence of a numerical approximation of the MHD system towards a strong solution of the limit system.     

\bigskip

\end{document}